\documentclass[12pt]{article}

\usepackage{amsfonts, amsmath, hyperref, latexsym, amssymb, amsthm, url,tikz, xcolor, rotating, array, bbm,}
\usepackage{rotating}
\usepackage[margin=2cm]{caption}

\setlength{\textheight}{8.5in}
\setlength{\textwidth}{6.25in}
\voffset=-.5in
\hoffset=-.5in

\pagestyle{plain}
\newtheorem{theorem}{Theorem}[section]
\newtheorem{lemma}[theorem]{Lemma}
\newtheorem{corollary}[theorem]{Corollary}

\newtheorem{proposition} [theorem]{Proposition}

\newtheorem{remark}[theorem]{Remark}

\theoremstyle{definition}
\newtheorem{definition}{Definition}[section]

%\allowdisplaybreaks

\title{Normal limit laws for vertex degrees in randomly grown hooking networks and bipolar networks}
\author{
Colin Desmarais\thanks{Supported by the Swedish Research Council, the Knut and Alice Wallenberg Foundation, and the Swedish Foundation's starting grant from the Ragnar S\"oderberg Foundation} 
\hspace{15mm}
Cecilia Holmgren\footnotemark[1] \\
\small{Department of Mathematics}\\
\small{Uppsala University}\\
\small{Uppsala, Sweden}\\
\texttt{\small\{colin.desmarais, cecilia.holmgren\}@math.uu.se} 
}

\begin{document}

\maketitle

\abstract{ We consider two types of random networks grown in blocks. Hooking networks are grown from a set of 
%simple 
graphs as blocks, each with a labelled vertex called a hook. At each step in the growth of the network, a vertex called a latch is chosen from the hooking network and a copy of one of the blocks is attached by fusing its hook with the latch. Bipolar networks are grown from a set of directed graphs as blocks, each with a single source and a single sink. At each step in the growth of the network, an arc is chosen and is replaced with a copy of one of the blocks. Using P\'olya urns, we prove normal limit laws for the degree distributions of both networks. We extend previous results by allowing for more than one block in the growth of the networks and by studying arbitrarily large degrees.
%, and by allowing a preferential scheme for the choice of the latch. 
%studying when the latch is chosen uniformly at random or preferentially according to its degree.
}

\bigskip
\noindent{\bf Keywords:} Hooking networks, bipolar networks, central limit laws, P\'olya urns, random trees, preferential attachment.

\noindent{\bf AMS subject classifications:} Primary:
	 60C05,     % Combinatorial probability
Secondary:
   	 05C80, 	% Random Graphs
	 05C07,	%Vertex Degrees
	 60F05,     % Central limit and other weak theorems    
	 05C05.	%Trees

%---------------------------------------------%
%---------- INTRODUCTION ----------%
%---------------------------------------------%
\section{Introduction}\label{sec:intro}

Several random tree models have been studied where at each step in the growth of the network, a vertex $v$ is chosen amongst all the vertices of the tree, and a child is added to $v$. When the choice of $v$ is made uniformly at random, these trees are called {\em random recursive trees}. When the choice of $v$ is made proportionally to its degree $\deg(v)$, these trees are called {\em random plane-oriented recursive trees}. 
%Finally, when the choice of $v$ is made proportional to $\chi \deg(v) + \rho$, where $\chi$ and $\rho$ are real numbers, then these trees are called {\em preferential attachment trees} 
Both models are examples of {\em preferential attachment trees}, where the choice of $v$ is made proportionally to $\chi \deg(v) + \rho$ for real parameters $\chi$ and $\rho$ (notice that a preferential attachment tree is a random recursive tree when $\chi = 0$ and is a random plane-oriented recursive tree when $\rho = 0$). P\'olya urns were used to prove multivariate normal limit laws for the degree distributions in all of these random tree models \cite{MASM:92, MASS:93, JANS:05,HOJS:17}. Asymptotic normality of degree sequences of similar types of preferential attachment models have also been established without the use of P\'olya urns \cite{RESA:16,WARE:17}.

The process of adding a child to a vertex $v$ in a tree can instead be thought of as taking the graph $K_2$ (two vertices joined by an edge) with one of the vertices labelled $h$, and fusing together the vertices $v$ and $h$. Hooking networks are grown in a similar manner from a set of graphs $\mathcal{C} = \{G_1, G_2, \ldots, G_m\}$, called {\em blocks}, where each block $G_i$ has a labelled vertex $h_i$ called a {\em hook}. At each step in the growth of the network, a vertex $v$ called a {\em latch} is chosen from the network, a block $G_i$ is chosen, and the hook $h_i$ and the vertex $v$ are fused together. A more precise formulation is laid out in Section \ref{sec:hooking}.

Several graphs can be thought of as hooking networks. Any tree can be grown as a hooking network with $K_2$ as the only block. A block graph (or clique graph) is a hooking network whose blocks are complete graphs, and a cactus graph is a hooking network whose blocks are cycles.
% (and that may include a single edge $K_2$ in the set of blocks). 
%(applications?)

We prove multivariate normal limit laws for the degree distributions of hooking networks as the number of blocks attached tends to infinity (see Theorem \ref{thm:hooking}). We allow for a preferential attachment scheme for the choice of the latch (i.e., the latch $v$ is chosen proportionally to $\chi \deg(v) + \rho$). We also assign to each block $G_i$ a value $p_i$ such that $p_1 + p_2 + \cdots + p_m = 1$, and choose the block $G_i$  to be attached with probability $p_i$. 

Along with the results for degree distributions of the random tree models described above, Theorem \ref{thm:hooking} also generalizes other results on previously studied hooking networks. Gopaladesikan, Mahmoud, and Ward \cite{GOMW:14} introduced {\em blocks trees}, which can be thought of as hooking networks grown from a set of trees as blocks, where the root of each block has a single child and acts as the hook. In their model, the latch is chosen uniformly at random at each step, and the block to be attached is chosen according to an assigned probability value. They proved a normal limit law for the number of leaves (vertices with degree 1) in blocks trees. Mahmoud \cite{MAHM:19} proved multivariate normal limit laws for the number of vertices with small degrees in {\em self-similar hooking networks}, which are hooking networks grown from a single block called a {\em seed}. Both the case where the latch is chosen uniformly at random and the case where the latch is chosen proportionally to its degree were studied in \cite{MAHM:19}. 
In the extended abstract \cite{DEHO:19}, we presented a proof of multivariate normal limit laws in the specific cases of hooking networks grown from several blocks when the choice of the latch as well as the choice of the block to be attached are made uniformly at random

%Multivariate normal limit laws were proved for hooking networks grown from several blocks where at each step in the growth of the network, the choice of the latch and the choice of the block to be attached were made uniformly at random \cite{DEHO:19}. 

The methods used to prove our results for hooking networks also apply to proving multivariate normal limit laws for outdegree distributions of bipolar networks (see Theorem \ref{thm:bipolar}). Bipolar networks are grown from a set $\mathcal{C} = \{B_1, B_2, \ldots, B_m\}$ of directed graphs, each with a single source $N_i$: a vertex with zero indegree ($\deg^-(N_i) = 0$), and a single sink $S_i$: a vertex with zero outdegree ($\deg^+(S_i) = 0$). At each step in the growth of the network, an arc $(v,u)$ is chosen and is replaced with one of the blocks $B_i$, by fusing $N_i$ to $v$ and $S_i$ to $u$; see Section \ref{sec:bipolar} for a more precise description. Previously, results were obtained for vertices of small outdegrees in bipolar networks grown from a single block, and where the arc $(v,u)$ to be replaced is chosen uniformly at random \cite{CHMA:16}. We extend previous results by looking at bipolar networks grown from more than one block, by generalizing the choice of the arc to be replaced, and by studying arbitrarily large degrees.

%------------------------------------------------------------------%
%---------- COMPOSITION OF THE PAPER ----------%
%------------------------------------------------------------------%
\subsection{Composition of the paper}\label{sec:composition}
 
 The networks studied are described in more detail in Section \ref{sec:networks}. Alongside the descriptions of the networks, running examples of hooking networks and bipolar networks are described in Sections \ref{sec:hooking} and \ref{sec:bipolar} respectively. Our main results are stated in Section \ref{sec:main}. These include multivariate normal limit laws for the vectors of degrees of hooking networks and vectors of outdegrees of bipolar networks. 
 
The theory of generalized P\'olya urns developed by Janson in \cite{JANS:04}, which is the main tool used in the proofs, is summarized in Section \ref{sec:polya}. 

The proofs of our main results are presented in Section \ref{sec:proofs}. This is done in three steps. We start by describing how we study the vertices in our networks as balls in urns in Section \ref{sec:vertsasballs}. Properties of the intensity matrices for these urns are gathered in Section \ref{sec:A}. In Section \ref{sec:mainproofs}, we prove that the matrices studied in \ref{sec:A} are indeed the intensity matrices for the urns we are studying and, with the help of theorems proved in \cite{JANS:04} and stated in Section \ref{sec:polya}, we finish the proofs of our main results.

%------------------------------------------------------------%
%---------- THE NETWORKS STUDIED ----------%
%------------------------------------------------------------%
\subsection{The networks studied}\label{sec:networks}

In the growth of hooking networks and in the growth of bipolar networks, a vertex $v$ is chosen at every step. The choice of the vertex $v$ is made with probability proportional to $\chi \deg(v) + \rho$ in the case of the hooking networks and proportional to $\chi \deg^+(v) + \rho$ in the case of the bipolar networks, where $\chi \geq 0$ and $\rho \in \mathbb{R}$ so that $\chi + \rho > 0$. Since these choices are made proportionally, without loss of generality, we can limit the choice of $\chi$ to $0$ or $1$ (simply divide the numerators and denominators of (\ref{eq:hooklatchprob}) and (\ref{eq:biplatchprob}) below by $\chi$ if this value is nonzero). When $\chi = 1$ we let $\rho > -1$, while we let $\rho$ be strictly positive when $\chi =0$ to avoid the cases where $\chi \deg(v) + \rho \leq 0$ or $\chi \deg^+(v) + \rho \leq 0$ (from the descriptions below we see that the hooking networks studied are connected and so the vertex $v$ has degree $\deg(v) > 0$; we also see below that all vertices $v$ that are candidates for being a latch in the bipolar networks studied satisfy $\deg^+(v) > 0$). For a positive integer $k$, we let $w_k := \chi k + \rho$.%When $\chi = -1$, we limit $\rho$ to the positive integers, and replace $\chi \deg(v) + \rho$ and $\chi \deg^+(v) + \rho$ with 0 when these values are negative. This last condition on $\rho$ when $\chi = -1$ guarantees that the assumption (A1) in Section \ref{sec:polya} of the P\'olya urn we use is satisfied. 

%------------------------------------------------------%
%---------- HOOKING NETWORKS ----------%
%------------------------------------------------------%
\subsubsection{Hooking networks}\label{sec:hooking}

Let $\mathcal{C} = \{ G_1, G_2, \ldots, G_m\}$ be a set of connected graphs, each with at least 2 vertices, and each with a labelled vertex $h_i$. We allow for the graphs to contain self-loops and multiple edges. The graph $G_i$ is called a {\em block}, and the vertex $h_i$ is called its {\em hook}. Each block $G_i$ is also assigned a positive probability $p_i$ such that $p_1 + p_2 + \cdots + p_m = 1$. For example, consider the set of blocks in Figure \ref{fig:hookseeds}, with their hooks labelled and their probabilities written underneath. 

%---------- HOOKING SEEDS FIGURE ----------%
\begin{figure}[h!]
\centering
\begin{tikzpicture}[scale=0.75]
\filldraw(2,1) circle (3pt) node(t1){} node[above right]{$h_1$};
\filldraw(1,0) circle (3pt) node(l1){};
\filldraw(3,0) circle (3pt) node(r1){};
\node at (2,-1) {$G_1$};
\node at (2,-1.6) {$p_1 = 1/6$};

\draw (l1.center) -- (t1.center)-- (r1.center);

\filldraw(6,1) circle (3pt) node(t2){}node[above right]{$h_2$};
\filldraw(5,0) circle (3pt) node(l2){};
\filldraw(5.6666,0) circle (3pt) node(bl2){};
\filldraw(6.3333, 0) circle (3pt) node(br2){};
\filldraw(7,0) circle (3pt) node(r2){};
\node at (6,-1) {$G_2$};
\node at (6,-1.6) {$p_1 = 1/3$};

\draw (l2.center) -- (t2.center) -- (r2.center);
\draw (bl2.center) -- (t2.center) -- (br2.center);

\filldraw(10,1) circle (3pt) node(t3){}node[above right]{$h_3$};
\filldraw(9,0) circle (3pt) node(l3){};
\filldraw(9.6666,0) circle (3pt) node(bl3){};
\filldraw(10.3333, 0) circle (3pt) node(br3){};
\filldraw(11,0) circle (3pt) node(r3){};
\node at (10,-1) {$G_3$};
\node at (10,-1.6) {$p_1 = 1/6$};

\draw (l3.center) -- (bl3.center) -- (br3.center) -- (r3.center);
\draw (bl3.center) -- (t3.center) -- (br3.center);

\filldraw(14,1) circle (3pt) node(t4){}node[above right]{$h_4$};
\filldraw(13,0) circle (3pt) node(l4){};
\filldraw(13.6666,0) circle (3pt) node(bl4){};
\filldraw(14.3333, 0) circle (3pt) node(br4){};
\filldraw(15,0) circle (3pt) node(r4){};
\node at (14,-1) {$G_4$};
\node at (14,-1.6) {$p_1 = 1/3$};

\draw (br4.center) -- (bl4.center) -- (t4.center) -- (r4.center) -- (br4.center) -- (t4.center) -- (l4.center) -- (bl4.center);
\draw (l4.center) arc (225: 315:1.4142);

\end{tikzpicture}
\caption{A set of simple graphs as blocks}
\label{fig:hookseeds}
\end{figure}
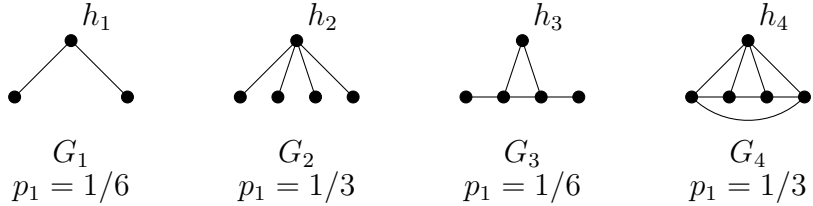

Let $\chi$ and $\rho$ be real numbers satisfying the conditions set above. A sequence of hooking networks $\mathcal{G}_0, \mathcal{G}_1, \mathcal{G}_2, \ldots$ is constructed as follows: one of the blocks $G_i$ is chosen, and we set $\mathcal{G}_0$ to be a copy of $G_i$ (the choice of the first block does not need to be done at random for our methods to work). The vertex $H$ that corresponds to the hook of this first block copied to make $\mathcal{G}_0$ is called the {\em master hook} of the hooking networks constructed afterwards; when all the blocks are trees the master hook acts as the root of the network. Recursively for $n \geq 1$, the hooking network $\mathcal{G}_n$ is constructed from $\mathcal{G}_{n-1}$ by first choosing a latch $v$ at random proportionally to $\chi \deg(v) + \rho$ amongst all the vertices of $\mathcal{G}_{n-1}$, that is, with probability 
\begin{equation}\label{eq:hooklatchprob}
\frac{\chi \deg(v) + \rho}{\sum_{u \in V(\mathcal{G}_{n-1})} \chi \deg(u) + \rho},
\end{equation}
where $V(\mathcal{G}_{n-1})$ is the vertex set of $\mathcal{G}_{n-1}$. Once the latch is chosen, a block $G_i$ is chosen according to its probability $p_i$. A copy of $G_i$ is attached to $\mathcal{G}_{n-1}$ by fusing together the latch $v$ with the hook $h_i$ of the copy of $G_i$; that is, $h_i$ is deleted and edges are drawn from $v$ to the former neighbours of $h_i$. Figure \ref{fig:hookingnetworks} is a sequence of hooking networks constructed from the set of blocks in Figure \ref{fig:hookseeds} by taking a copy of $G_3$ and attaching copies of $G_4$, then $G_2$, and finally a copy of $G_1$. The master hook of the network is labelled $H$, and at each step the vertex chosen to be the latch is denoted by $\ast$. 

%---------- HOOKING NETWORKS FIGURE ----------%
\begin{figure}[h!]
\centering
\begin{tikzpicture}[scale=0.85]

\filldraw(3,8) circle (3pt) node(t1){} node[above right]{$H$};
\filldraw(2,7) circle (3pt) node(l1){};
\filldraw(2.6666,7) circle (3pt) node(bl1){};
\filldraw(3.3333, 7) circle (3pt) node(br1){};
\node(r1) at (4,7){\LARGE{$\ast$}};
%\filldraw(4,6) circle (3pt) node(r1){};
\draw (l1.center) -- (bl1.center) -- (br1.center) -- (r1.center);
\draw (bl1.center) -- (t1.center) -- (br1.center);

\node at (3,5.1) {$\mathcal{G}_0$};

\filldraw(9,8) circle (3pt) node(t1){}node[above right]{$H$};
\node(l1) at  (8,7){\LARGE{$\ast$}};
\filldraw(8.6666,7) circle (3pt) node(bl1){};
\filldraw(9.3333, 7) circle (3pt) node(br1){};
\filldraw(10,7) circle (3pt) node(r1){};
%\filldraw(4,6) circle (3pt) node(r1){};
\draw (l1.center) -- (bl1.center) -- (br1.center) -- (r1.center);
\draw (bl1.center) -- (t1.center) -- (br1.center);

\filldraw(9,6) circle (3pt) node(l2){};
\filldraw(9.6666, 6) circle (3pt) node (bl2){};
\filldraw(10.3333, 6) circle (3pt) node (br2){};
\filldraw(11,6) circle (3pt) node (r2){};
\draw (br2.center) -- (bl2.center) -- (r1.center) -- (br2.center) -- (r2.center) -- (r1.center) -- (l2.center) -- (bl2.center);
\draw (l2.center) arc (225: 315:1.4142);

\node at (9,5.1) {$\mathcal{G}_1$};

\filldraw(3,4) circle (3pt) node(t1){}node[above right]{$H$};
\filldraw(2,3) circle (3pt) node(l1){};
\filldraw(2.6666,3) circle (3pt) node(bl1){};
\filldraw(3.3333, 3) circle (3pt) node(br1){};
\node(r1) at (4,3){\LARGE{$\ast$}};
%\filldraw(4,6) circle (3pt) node(r1){};
\draw (l1.center) -- (bl1.center) -- (br1.center) -- (r1.center);
\draw (bl1.center) -- (t1.center) -- (br1.center);

\filldraw(3,2) circle (3pt) node(l2){};
\filldraw(3.6666, 2) circle (3pt) node (bl2){};
\filldraw(4.3333, 2) circle (3pt) node (br2){};
\filldraw(5,2) circle (3pt) node (r2){};
\draw (br2.center) -- (bl2.center) -- (r1.center) -- (br2.center) -- (r2.center) -- (r1.center) -- (l2.center) -- (bl2.center);
\draw (l2.center) arc (225: 315:1.4142);

\filldraw(1,4) circle (3pt) node(l3){};
\filldraw(1, 3.3333) circle (3pt) node(bl3){};
\filldraw(1, 2.6666) circle (3pt) node(br3){};
\filldraw(1,2) circle (3pt) node(r3){};
\draw (l3.center) -- (l1.center) -- (r3.center);
\draw (bl3.center) -- (l1.center) -- (br3.center);

\node at (3,1.1) {$\mathcal{G}_2$};

\filldraw(9,4) circle (3pt) node(t1){}node[above right]{$H$};
\filldraw(8,3) circle (3pt) node(l1){};
\filldraw(8.6666,3) circle (3pt) node(bl1){};
\filldraw(9.3333, 3) circle (3pt) node(br1){};
\filldraw(10,3) circle (3pt) node(r1){};
%\filldraw(4,6) circle (3pt) node(r1){};
\draw (l1.center) -- (bl1.center) -- (br1.center) -- (r1.center);
\draw (bl1.center) -- (t1.center) -- (br1.center);

\filldraw(9,2) circle (3pt) node(l2){};
\filldraw(9.6666, 2) circle (3pt) node (bl2){};
\filldraw(10.3333, 2) circle (3pt) node (br2){};
\filldraw(11,2) circle (3pt) node (r2){};
\draw (br2.center) -- (bl2.center) -- (r1.center) -- (br2.center) -- (r2.center) -- (r1.center) -- (l2.center) -- (bl2.center);
\draw (l2.center) arc (225: 315:1.4142);

\filldraw(7,4) circle (3pt) node(l3){};
\filldraw(7, 3.3333) circle (3pt) node(bl3){};
\filldraw(7, 2.6666) circle (3pt) node(br3){};
\filldraw(7,2) circle (3pt) node(r3){};
\draw (l3.center) -- (l1.center) -- (r3.center);
\draw (bl3.center) -- (l1.center) -- (br3.center);

\filldraw(11.414, 3) circle (3pt) node (r4){};
\filldraw(10, 4.414) circle (3pt) node (l4){};
\draw (r4.center) -- (r1.center) -- (l4.center);

\node at (9,1.1) {$\mathcal{G}_3$};
\end{tikzpicture}
\caption{A sequence of hooking networks grown from the blocks $G_1, G_2, G_3$ and $G_4$ of Figure \ref{fig:hookseeds}}
\label{fig:hookingnetworks}
\end{figure}
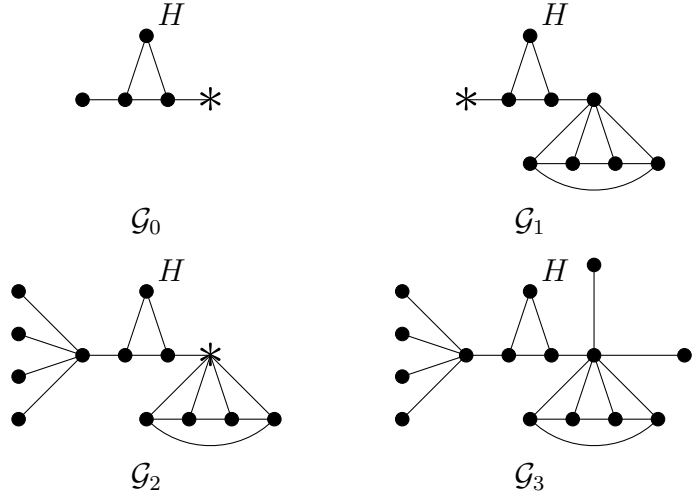

%-----------------------------------------------------%
%---------- BIPOLAR NETWORKS ----------%
%-----------------------------------------------------%
\subsubsection{Bipolar networks}\label{sec:bipolar}

For a vertex $v$ in a directed graph $B$, we denote by $\deg^-(v)$ the indegree of $v$: the number of arcs leading into $v$, and by $\deg^+(v)$ the outdegree of $v$: the number of arcs leading out of $v$. If $\deg^-(v) = 0$ then $v$ is called a {\em source}, and if $\deg^+(v) = 0$, $v$ is called a {\em sink}. Chen and Mahmoud \cite{CHMA:16} define a {\em bipolar directed graph} $B$ to be a directed acyclic graph containing a unique source $N$ called the {\em north pole} of $B$, a unique sink $S$ called the {\em south pole} of $B$, and a directed path from every vertex $v \neq S$ in $B$ to $S$. The methods presented here also apply to a more relaxed definition of bipolar directed graphs: connected directed graphs with a single source and a single sink. Let $\mathcal{C} = \{B_1, B_2, \ldots, B_m\}$ be a set of bipolar directed graphs, each with their north pole $N_i$ and south pole $S_i$ identified. Each $B_i$ is called a {\em block}, and is assigned a probability $p_i$ such that $p_1 + p_2 + \cdots + p_m = 1$. For example, consider the set of blocks in Figure \ref{fig:bipolarseeds}, with their north and south poles labelled as well as their probabilities. 

%---------- BIPOLAR SEEDS FIGURE ----------%
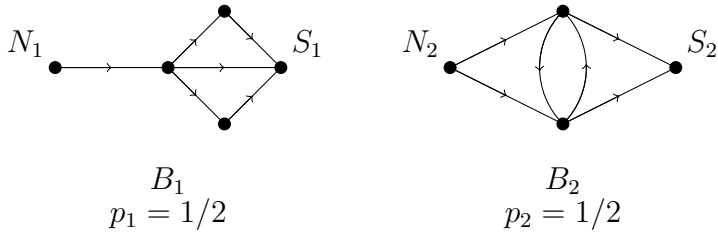
\begin{figure}[h!]
\centering
\begin{tikzpicture}[scale=0.75]

\filldraw(1,2) circle (3pt) node(n1){} node[above left]{$N_1$};
\node(nb1) at (3.5, 1.5){};
\filldraw(4, 1) circle (3pt) node (b1){};
\node(nt1) at (3.5, 2.5){};
\filldraw(4, 3) circle (3pt) node(t1){};
\node(bm1) at (4.5, 1.5){};
\node(nm1) at (2,2){};
\filldraw(3,2) circle (3pt) node (m1){};
\node(tm1) at (4.5, 2.5){};
\filldraw(5,2) circle (3pt) node (s1){} node[above right]{$S_1$};
\node(ms1) at (4,2){};
\draw[->] (m1.center) -- (nb1.center);
\draw[->] (m1.center) -- (nt1.center);
\draw[->] (n1.center) -- (nm1.center);
\draw[->] (b1.center) -- (bm1.center);
\draw[->] (t1.center) -- (tm1.center);
\draw[->] (m1.center) -- (ms1.center);
\draw (n1.center) -- (m1.center) -- (s1.center);
\draw (m1.center) -- (t1.center) -- (s1.center);
\draw (m1.center) -- (b1.center) -- (s1.center);
\node at (3,0) {$B_1$};
\node at (3, -0.6) {$p_1 = 1/2$};

\filldraw(8,2) circle (3pt) node (n2){} node[above left]{$N_2$};
\node(nt2) at (9,2.5){};
\filldraw(10,3) circle (3pt) node (t2){};
\node(nb2) at (9,1.5){};
\filldraw(10,1) circle (3pt) node (b2){};
\node(ts2) at (11,2.5){};
\node(bs2) at (11,1.5){};
\filldraw(12,2) circle (3pt) node (s2){} node[above right]{$S_2$};
\draw[->] (n2.center) -- (nt2.center);
\draw[->] (n2.center) -- (nb2.center);
\draw[->] (t2.center) -- (ts2.center);
\draw[->] (b2.center) -- (bs2.center);
\draw[->] (t2.center) arc (135:182.5:1.4142);
\draw[->] (b2.center) arc (-45:2.5:1.4142);
\draw (n2.center) -- (t2.center) -- (s2.center);
\draw (n2.center) -- (b2.center) -- (s2.center);
\draw (t2.center) arc (135: 225: 1.4142);
\draw (b2.center) arc (-45: 45:1.4142);
\node at (10,0) {$B_2$};
\node at (10, -0.6) {$p_2  = 1/2$};

\end{tikzpicture}
\caption{A set of bipolar directed graphs as blocks}
\label{fig:bipolarseeds}
\end{figure}

Once again, we let $\chi$ and $\rho$ be real numbers satisfying the conditions set at the beginning of this section. We choose a block $B_i$ and set the bipolar network $\mathcal{B}_0$ to be a copy of $B_i$ (once again, the choice of the first block need not be made at random). The vertices corresponding to the north and south poles of $\mathcal{B}_0$ serve as the master source $N$ and master sink $S$ respectively of the bipolar networks constructed afterwards. For $n \geq 1$, the bipolar network $\mathcal{B}_n$ is constructed from $\mathcal{B}_{n-1}$ in a manner similar to that of hooking networks. First, a latch $v$ is chosen proportionally to $\chi \deg^+(v) + \rho$ amongst all the vertices in $\mathcal{B}_{n-1}$ that are not the master sink, that is, with probability 
\begin{equation}\label{eq:biplatchprob}
\frac{\chi \deg^+(v) + \rho}{\sum_{u \in V(\mathcal{B}_{n-1})\setminus\{S\}}\chi \deg^+(u) + \rho},
\end{equation}
where $V(\mathcal{B}_{n-1})$ is the vertex set of $\mathcal{B}_{n-1}$. Once the is latch chosen, one of the arcs $(v,u)$ leading out of $v$ is chosen uniformly at random amongst all the arcs leading out of $v$, and finally a block $B_i$ is chosen according to its probability $p_i$. The arc $(v,u)$ is deleted, and a copy of the block $B_i$ is added by fusing the north pole $N_i$ with $v$, and fusing the south pole $S_i$ with $u$. We never allow the master sink to be chosen as a latch (since it has no arcs leading out of it). Figure \ref{fig:bipolarnetworks} is a sequence of bipolar networks constructed from the blocks in Figure \ref{fig:bipolarseeds}. The master source $N$ and the master sink $S$ are labelled, and at each step, the latch $v$ is denoted by $\ast$, and the arc $(v,u)$ to be removed is dashed. 

%---------- BIPOLAR NETWORKS FIGURE---------%
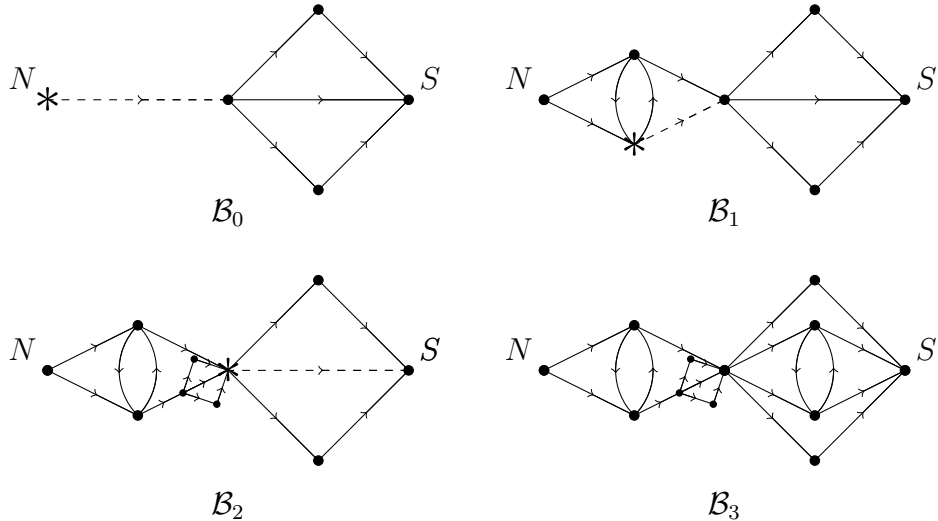
\begin{figure}[h!]
\centering
\begin{tikzpicture}[scale = 0.6]

\filldraw(9,4) circle (3pt) node(n1){}node[above right]{$S$};
\node(nb1) at (8, 3){};
\filldraw(7, 2) circle (3pt) node (b1){};
\node(nt1) at (8, 5){};
\filldraw(7, 6) circle (3pt) node(t1){};
\node(bm1) at (6, 3){};
\node(nm1) at (7,4){};
\filldraw(5,4) circle (3pt) node (m1){};
%\node(m1) at (5,4) {\LARGE $\ast$};
\node(tm1) at (6, 5){};
%\filldraw(1,4) circle (3pt) node (s1){}node[above left]{$N$};
\node(s1) at (1,4) {\LARGE $\ast$};
\node at (1,4)[above left]{$N$};
\node(ms1) at (3,4){};
\draw[-<] (n1.center) -- (nb1.center);
\draw[-<] (n1.center) -- (nt1.center);
\draw[-<] (n1.center) -- (nm1.center);
\draw[-<] (b1.center) -- (bm1.center);
\draw[-<] (t1.center) -- (tm1.center);
\draw[dashed,-<] (m1.center) -- (ms1.center);
\draw (n1.center) -- (m1.center);
\draw[dashed] (m1.center) -- (s1.center);
\draw (n1.center) -- (t1.center) -- (m1.center);
\draw (n1.center) -- (b1.center) -- (m1.center);
\node at (5,1.5) {$\mathcal{B}_0$};

\filldraw(20,4) circle (3pt) node(n1){}node[above right]{$S$};
\node(nb1) at (19, 3){};
\filldraw(18, 2) circle (3pt) node (b1){};
\node(nt1) at (19, 5){};
\filldraw(18, 6) circle (3pt) node(t1){};
\node(bm1) at (17, 3){};
\node(nm1) at (18,4){};
\filldraw(16,4) circle (3pt) node (m1){};
%\node(m1) at (16,4) {\LARGE $\ast$};
\node(tm1) at (17, 5){};
\filldraw(12,4) circle (3pt) node (s1){}node[above left]{$N$};
\node(ms1) at (14,4){};
\draw[-<] (n1.center) -- (nb1.center);
\draw[-<] (n1.center) -- (nt1.center);
\draw[-<] (n1.center) -- (nm1.center);
\draw[-<] (b1.center) -- (bm1.center);
\draw[-<] (t1.center) -- (tm1.center);
%\draw[->] (m1.center) -- (ms1.center);
\draw (n1.center) -- (m1.center); %-- (s1.center);
\draw (n1.center) -- (t1.center) -- (m1.center);
\draw (n1.center) -- (b1.center) -- (m1.center);

%\filldraw(16,4) circle (3pt) node (n2){};
\node(n2) at (16,4){};
\node(nt2) at (15,4.5){};
\filldraw(14,5) circle (3pt) node (t2){};
\node(nb2) at (15,3.5){};
%\filldraw(14,3) circle (3pt) node (b2){};
\node(b2) at (14,3) {\LARGE $\ast$};
\node(ts2) at (13,4.5){};
\node(bs2) at (13,3.5){};
\filldraw(12,4) circle (3pt) node (s2){};
\draw[-<] (n2.center) -- (nt2.center);
\draw[dashed, -<] (n2.center) -- (nb2.center);
\draw[-<] (t2.center) -- (ts2.center);
\draw[-<] (b2.center) -- (bs2.center);
\draw[->] (t2.center) arc (135:182.5:1.4142);
\draw[->] (b2.center) arc (-45:2.5:1.4142);
\draw  (b2.center) -- (s2.center);
\draw[dashed] (n2.center) -- (b2.center);
\draw (n2.center) -- (t2.center) -- (s2.center);
\draw (t2.center) arc (135: 225: 1.4142);
\draw (b2.center) arc (-45: 45:1.4142);
\node at (16,1.5) {$\mathcal{B}_1$};

\filldraw(9,-2) circle (3pt) node(n1){}node[above right]{$S$};
%\node(n1) at (9,-2){\LARGE $\ast$};
\node at (9,-2)[above right]{$S$};
\node(nb1) at (8, -3){};
\filldraw(7, -4) circle (3pt) node (b1){};
\node(nt1) at (8, -1){};
\filldraw(7, 0) circle (3pt) node(t1){};
\node(bm1) at (6, -3){};
\node(nm1) at (7,-2){};
\node(m1) at (5,-2) {\LARGE $\ast$};
%\filldraw(5,-2) circle (3pt) node (m1){};
\node(tm1) at (6, -1){};
\filldraw(1,-2) circle (3pt) node (s1){}node[above left]{$N$};
\node(ms1) at (3,-2){};
\draw[-<] (n1.center) -- (nb1.center);
\draw[-<] (n1.center) -- (nt1.center);
\draw[dashed, -<] (n1.center) -- (nm1.center);
\draw[-<] (b1.center) -- (bm1.center);
\draw[-<] (t1.center) -- (tm1.center);
%\draw[->] (m1.center) -- (ms1.center);
\draw[dashed] (n1.center) -- (m1.center); %-- (s1.center);
\draw (n1.center) -- (t1.center) -- (m1.center);
\draw (n1.center) -- (b1.center) -- (m1.center);

\node (n2) at (5,-2){};
\node(nt2) at (4,-1.5){};
\filldraw(3,-1) circle (3pt) node (t2){};
\node(nb2) at (4,-2.5){};
\filldraw(3,-3) circle (3pt) node (b2){};
\node(ts2) at (2,-1.5){};
\node(bs2) at (2,-2.5){};
\filldraw(1,-2) circle (3pt) node (s2){};
\draw[-<] (n2.center) -- (nt2.center);
%\draw[-<] (n2.center) -- (nb2.center);
\draw[-<] (t2.center) -- (ts2.center);
\draw[-<] (b2.center) -- (bs2.center);
\draw[->] (t2.center) arc (135:182.5:1.4142);
\draw[->] (b2.center) arc (-45:2.5:1.4142);
\draw (n2.center) -- (t2.center) -- (s2.center);
\draw (n2.center) -- (b2.center) -- (s2.center);
\draw (t2.center) arc (135: 225: 1.4142);
\draw (b2.center) arc (-45: 45:1.4142);

\filldraw(4.75, -2.75) circle (2pt) node (b3){};
\filldraw(4.25, -1.75) circle (2pt) node (t3){};
\filldraw(4,-2.5) circle (2pt) node (m3){};
\node(nb3) at (4.375, -2.625){};
\node(nt3) at (4.125, -2.125){};
\node(nm3) at (3.5, -2.75){};
\node(ms3) at (4.5, -2.25){};
\node(tm3) at (4.625, -1.875){};
\node(bm3) at (4.875, -2.375){};
\draw[->] (m3.center) -- (nb3.center);
\draw[->] (m3.center) -- (nt3.center);
\draw[-<] (n2.center) -- (nm3.center);
\draw[->] (nb2.center) -- (ms3.center);
\draw[->] (t3.center) -- (tm3.center);
\draw[->] (b3.center) -- (bm3.center);
\draw (m3.center) -- (t3.center) -- (m1.center);
\draw (m3.center) -- (b3.center) -- (m1.center);
\node at (5,-5) {$\mathcal{B}_2$};

\filldraw(20,-2) circle (3pt) node(n1){}node[above right]{$S$};
\node(nb1) at (19, -3){};
\filldraw(18, -4) circle (3pt) node (b1){};
\node(nt1) at (19, -1){};
\filldraw(18, 0) circle (3pt) node(t1){};
\node(bm1) at (17, -3){};
\node(nm1) at (18,-2){};
\filldraw(16,-2) circle (3pt) node (m1){};
\node(tm1) at (17, -1){};
\filldraw(12,-2) circle (3pt) node (s1){}node[above left]{$N$};
\node(ms1) at (14,-2){};
\draw[-<] (n1.center) -- (nb1.center);
\draw[-<] (n1.center) -- (nt1.center);
%\draw[->] (n1.center) -- (nm1.center);
\draw[-<] (b1.center) -- (bm1.center);
\draw[-<] (t1.center) -- (tm1.center);
%\draw[->] (m1.center) -- (ms1.center);
%\draw (n1.center) -- (m1.center); %-- (s1.center);
\draw (n1.center) -- (t1.center) -- (m1.center);
\draw (n1.center) -- (b1.center) -- (m1.center);

\filldraw(16,-2) circle (3pt) node (n2){};
\node(nt2) at (15,-1.5){};
\filldraw(14,-1) circle (3pt) node (t2){};
\node(nb2) at (15,-2.5){};
\filldraw(14,-3) circle (3pt) node (b2){};
\node(ts2) at (13,-1.5){};
\node(bs2) at (13,-2.5){};
\filldraw(12,-2) circle (3pt) node (s2){};
\draw[-<] (n2.center) -- (nt2.center);
%\draw[->] (n2.center) -- (nb2.center);
\draw[-<] (t2.center) -- (ts2.center);
\draw[-<] (b2.center) -- (bs2.center);
\draw[->] (t2.center) arc (135:182.5:1.4142);
\draw[->] (b2.center) arc (-45:2.5:1.4142);
\draw (n2.center) -- (t2.center) -- (s2.center);
\draw (n2.center) -- (b2.center) -- (s2.center);
\draw (t2.center) arc (135: 225: 1.4142);
\draw (b2.center) arc (-45: 45:1.4142);

\filldraw(15.75, -2.75) circle (2pt) node (b3){};
\filldraw(15.25, -1.75) circle (2pt) node (t3){};
\filldraw(15,-2.5) circle (2pt){};
\node(nb3) at (15.375, -2.625){};
\node(nt3) at (15.125, -2.125){};
\node(nm3) at (14.5, -2.75){};
\node(ms3) at (15.5, -2.25){};
\node(tm3) at (15.625, -1.875){};
\node(bm3) at (15.875, -2.375){};
\draw[->] (nb2.center) -- (nb3.center);
\draw[->] (nb2.center) -- (nt3.center);
\draw[-<] (n2.center) -- (nm3.center);
\draw[->] (nb2.center) -- (ms3.center);
\draw[->] (t3.center) -- (tm3.center);
\draw[->] (b3.center) -- (bm3.center);
\draw (nb2.center) -- (t3.center) -- (m1.center);
\draw (nb2.center) -- (b3.center) -- (m1.center);

\filldraw(20,-2) circle (3pt) node (n4){};
\node(nt4) at (19,-1.5){};
\filldraw(18,-1) circle (3pt) node (t4){};
\node(nb4) at (19,-2.5){};
\filldraw(18,-3) circle (3pt) node (b4){};
\node(ts4) at (17,-1.5){};
\node(bs4) at (17,-2.5){};
\filldraw(16,-2) circle (3pt) node (s4){};
\draw[-<] (n4.center) -- (nt4.center);
\draw[-<] (n4.center) -- (nb4.center);
\draw[-<] (t4.center) -- (ts4.center);
\draw[-<] (b4.center) -- (bs4.center);
\draw[->] (t4.center) arc (135:182.5:1.4142);
\draw[->] (b4.center) arc (-45:2.5:1.4142);
\draw (n4.center) -- (t4.center) -- (s4.center);
\draw (n4.center) -- (b4.center) -- (s4.center);
\draw (t4.center) arc (135: 225: 1.4142);
\draw (b4.center) arc (-45: 45:1.4142);
\node at (16,-5) {$\mathcal{B}_3$};

\end{tikzpicture}
\caption{A sequence of bipolar networks grown from the blocks $B_1$ and $B_2$ from Figure \ref{fig:bipolarseeds}}
\label{fig:bipolarnetworks}
\end{figure}

Previously, Chen and Mahmoud \cite{CHMA:16} studied what they called {\em self-similar bipolar networks}. These are bipolar networks grown from a single bipolar directed graph as the only block. At each step in the growth of their networks, an arc $(v,u)$ is chosen uniformly at random amongst all the arcs to be deleted before being replaced with a copy of the block. This is equivalent to choosing $v$ proportionally to its outdegree $\deg^+(v)$, and then choosing an arc $(v,u)$ uniformly at random amongst all the arcs leading out of $v$. Therefore, the model of bipolar networks introduced here extends their model.

%--------------------------------------------%
%---------- MAIN RESULTS ----------%
%--------------------------------------------%
\subsection{Main results}\label{sec:main}

Before we state the main results, we need a useful definition. In the interest of length, the notation {\em (out)degree} is used in the following discussion, and is interpreted as {\em degree} for hooking networks and {\em outdegree} for bipolar networks. 

Depending on the set of blocks that are used to grow the hooking networks or bipolar networks, it is possible for some positive integers to never appear as the (out)degree of a vertex in the network, while some integers are only the (out)degree of at most one vertex at some point in the growth of the network. By ignoring these so-called {\em nonessential (out)degrees}, formally defined below, the proofs using P\'olya urns are simplified. We also show by a simple argument below (see Proposition \ref{pro:adm}) that only the master hook or master source may have a nonessential (out)degree. Excluding this single vertex from the (out)degree distributions does not affect the asymptotic behaviour of these distributions.

%For obvious reasons, we do not study the number of vertices with these (out)degrees, which we call {\em inadmissible (out)degrees}. In fact, ignoring these (out)degrees (but not ignoring the vertices) makes the proofs using P\'olya urns more simple. We formally define admissible and inadmissible (out)degrees. 

\begin{definition}\label{def:admissible}
Given a set $\mathcal{C}$ of blocks, a (strictly) positive integer $k$ is called an {\em essential (out)degree} if with positive probability, there is some $n$ so that the $n$-th iteration of the network grown out of $\mathcal{C}$ has at least two vertices with (out)degree $k$. A positive integer is called a {\em nonessential (out)degree} if it is not an essential (out)degree. 
\end{definition}

\begin{remark}
Our definition of essential (out)degrees differs slightly from the definition of {\em admissible} (out)degrees used in \cite{CHMA:16} and \cite{MAHM:19}, where any (out)degree that may appear in the network is considered an admissible (out)degree.
\end{remark}

In the example of hooking networks grown in Section \ref{sec:hooking} from the blocks in Figure \ref{fig:hookseeds}, all of the hooks of the blocks have even degrees, and all other vertices in the blocks have odd degrees. As a result, during the growth of the hooking networks, only the master hook has even degree, while every other vertex has odd degree (as is evidenced by the hooking networks in Figure \ref{fig:hookingnetworks}). In that case, the odd numbers are essential degrees, and the even numbers are nonessential. 

\begin{proposition}\label{pro:adm}
The only vertex in a hooking network (or bipolar network) that can have a nonessential (out)degree is the master hook (or master source) of the network. 
\end{proposition}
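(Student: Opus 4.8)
The plan is to prove the contrapositive: I would show that any vertex other than the master hook (resp. master source), whenever it can appear with a given (out)degree $k$, forces $k$ to be essential. Concretely, fix a vertex $v$ that is not the master hook and suppose $v$ has (out)degree $k$ with positive probability at some step $n$. I would then exhibit a single positive-probability sequence of growth steps producing a network with \emph{two} vertices of (out)degree $k$, which is exactly the definition of $k$ being essential; hence no vertex other than the master hook (master source) can carry a nonessential (out)degree.

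First I would record that each prescribed growth step has positive probability. The standing conditions on $\chi,\rho$ give $w_j = \chi j + \rho > 0$ for every $j \geq 1$, so in the hooking model any vertex (all of which have positive degree) can be chosen as the latch, while in the bipolar model any non-sink vertex has positive outdegree and can be chosen as latch, after which each of its outgoing arcs is selected with positive probability; moreover each block $G_i$ (resp. $B_i$) is chosen with its positive probability $p_i$. Thus any prescribed finite sequence of latch/arc/block choices has positive probability. Next I would invoke the structural fact that every vertex except the master hook is born as a non-hook vertex of some block copy; in the bipolar case, every vertex except the master source is born as a non-pole vertex of some block copy, since fusing affects only the already-present latch $v$ and the already-present head $u$, and the south pole contributes outdegree $0$.

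The heart of the argument is a replication step. From the positive-probability history witnessing $v$ at (out)degree $k$ at step $n$, I would read off the degree trajectory of $v$: its birth block $G_i$ (giving initial (out)degree $d_0$) together with the steps at which $v$ was chosen as a latch and the blocks then attached, whose increments sum to $k - d_0$. In the bipolar case only the latch steps change the outdegree, since serving as a head $u$ leaves outdegree unchanged, so those steps can be ignored. After running this history up to step $n$, I would attach a fresh copy of the birth block $G_i$ at any admissible latch/arc, creating a brand-new vertex $v'$ that is a copy of the same block vertex and hence starts at (out)degree $d_0$. I then replay $v$'s latch history on $v'$: at each step I select $v'$ as latch and attach the same block, which is legitimate because $v'$ always has positive degree (resp. positive outdegree and thus an available outgoing arc, whose choice does not affect its outdegree). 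This drives $v'$ to (out)degree $k$. Since attaching a block modifies only the degree of the latch (hooking) or the outdegree of the latch (bipolar) and otherwise only creates new vertices — fusing a south pole onto a head leaving that head's outdegree unchanged — the vertex $v$ is never disturbed and stays at (out)degree $k$ throughout. The final network then has two vertices of (out)degree $k$, and the entire construction has positive probability, so $k$ is essential.

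The step I expect to be the main obstacle is the bipolar bookkeeping. I must keep carefully separated the two ways a vertex's incidence changes — absorbing a north pole when chosen as a latch versus absorbing a south pole when it is the head of a replaced arc — so as to justify that the outdegree trajectory depends only on the latch steps and can therefore be replayed faithfully on the twin $v'$, and to confirm that growing $v'$ never alters the outdegree of $v$ even when $v$ incidentally serves as a head during the replication. For hooking networks this separation is immediate, since only the latch gains degree, so the bipolar case is the only delicate point.
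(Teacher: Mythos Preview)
Your proposal is correct and follows essentially the same replication argument as the paper's own proof: spawn a twin $v'$ from a fresh copy of $v$'s birth block, then replay $v$'s sequence of latch events on $v'$ to obtain two vertices of (out)degree $k$. The only small slip is that when you attach the fresh block you must pick a latch other than $v$ (the paper uses the master hook) so that $v$ genuinely remains at (out)degree $k$; apart from that, your bipolar bookkeeping is actually more thorough than the paper's, which simply declares the argument ``similar.''
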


\begin{proof}
We only prove the proposition for hooking networks; the argument is similar for bipolar networks. 

Suppose there is a positive probability that a vertex $v$ which is not the master hook has degree $k$ in the hooking network $\mathcal{G}_{n}$, and without loss of generality let $n$ be the smallest number for which $\mathcal{G}_n$ has a vertex $v$ with degree $k$. We will show that with positive probability, another vertex that is not the master hook will have degree $k$ in a later iteration of the hooking network. 

The vertex $v$ first appears in the network as a non-hook vertex with degree $k_0$ of a newly added block; say the block was $G_{i_0}$ and $v$ is a copy of the vertex $v_0$ in $G_{i_0}$. If $k_0 \neq k$, then that means hooks of other blocks were fused to $v$, say the first hook fused to $v$ belonged to $G_{i_1}$, the second belonged to $G_{i_2}$, and so on until the last hook fused to $v$ which belonged to $G_{i_r}$ (which was the last block added to create $\mathcal{G}_n$). With positive probability, a copy of the block $G_{i_0}$ is joined to $\mathcal{G}_{n}$ by  fusing the hook of $G_{i_0}$ with a vertex that is not $v$, say the master hook. Let $u$ be the newly added vertex in the hooking network that is a copy of $v_0$ in $G_{i_0}$. For $j=1, \ldots, r$, there is a positive probability that the block $G_{i_j}$ is added to the hooking network $\mathcal{G}_{n + j}$ by fusing the hook of $G_{i_j}$ with $u$. In this case, $u$ has degree $k$ in $\mathcal{G}_{n+r+1}$, and so there is a positive probability that 2 vertices ($v$ and $u$) have degree $k$ in $\mathcal{G}_{n + r + 1}$. Therefore, $k$ is an essential degree. 
\end{proof}

Also note that in the case of bipolar networks, only the master sink of the network has outdegree 0, and we therefore ignore this vertex completely.

%---------- HOOKING NETWORK THEOREM --------%
\subsubsection{Main results for hooking networks}\label{sec:hookingthm}

Let $\mathcal{C} = \{G_1, G_2, \ldots, G_m\}$ be a set of blocks, each with an identified hook $h_i$, and let $\mathcal{G}_0, \mathcal{G}_1, \mathcal{G}_2, \ldots$ be a sequence of hooking networks grown from $\mathcal{C}$, with the master hook of the network labelled $H$. We allow for the latches and the blocks added at each step to be chosen in the manner laid out in Section \ref{sec:networks} (that is, with linear preferential attachment with parameters $\chi$ and $\rho$, and probabilities $p_i$ assigned to each block $G_i$). For a positive integer $r$, let 
\[ k_1 < k_2 < \cdots < k_r \]
be the first $r$ essential degrees. For a positive integer $k$, recall that $w_k = \chi k + \rho$. For each block $G_i$ in the set $\mathcal{C}$, let $V(G_i)$ be its vertex set. For a positive integer $k$, define 
\begin{equation}\label{eq:fhook}
 f(k) := \sum_{G_i \in \mathcal{C}} p_i \cdot | \{ v \in V(G_i) \setminus \{h_i\} : \deg(v) = k\}| 
 \end{equation}
and
\begin{equation}\label{eq:ghook}
g(k) := \sum_{\substack{G_i \in \mathcal{C} \\ \deg(h_i) = k} }\!\!\!\!p_i.
\end{equation}
The value $f(k)$ is the expected number of new vertices of degree $k$ (that are not hooks) added at any step, and $g(k)$ is the probability that the degree of the latch chosen at any step is increased by $k$ after fusing with the hook of the newly attached block. For example, for the blocks in Figure \ref{fig:hookseeds} we have that $f(1) = 2$ and $f(3) = 5/3$, while $g(2) = 1/3$ and $g(4) = 2/3$. 
%Note that if $f(k) \neq 0$, then $k$ is an admissible degree. 
% Notice that $\sum_{j \geq 1} g(j) = 1$ since $p_1 + p_2 + \cdots + p_m= 1$. 
Define 
\begin{equation}\label{eq:lambdahook}
 \lambda_1 := \sum_{k \geq 1} (w_k f(k) + \chi k g(k)).
 \end{equation}
The value $\lambda_1$ is the expected change in the denominator of ($\ref{eq:hooklatchprob}$) at each step in the growth of the hooking network. For our running example of hooking networks grown from the blocks in Figure \ref{fig:hookseeds}, if we let $\chi =1$ and $\rho =0$, then 
\begin{equation}\label{eq:lambdahookexample}
\lambda_1 = \frac{31}{3}.
\end{equation}
Let $\nu_1 := f(k_1)/(\lambda_1 + w_{k_1})$, and define recursively for $i=2, \ldots, r$
\begin{equation}\label{eq:vhookdef}
 \nu_i := \frac{1}{\lambda_1 + w_{k_i}}\left(f(k_i) + \sum_{j=1}^{i-1}w_{k_j}g(k_i - k_j)\nu_j\right).
 \end{equation}
The value $\lambda_1 \nu_i$ is the limit of the expected proportion of vertices with degree $k_i$ (see Remark \ref{rem:hookprop} below). Let $\nu$ be the vector 
\begin{equation}\label{eq:vhook}
\nu := (\nu_1, \nu_2, \ldots, \nu_r). 
\end{equation}
For our running example of hooking networks grown from the blocks in Figure \ref{fig:hookseeds} with $\chi =1$ and $\rho =0$, and if we let $r=3$, then the first 3 essential degrees are $1,3,5$ (recall that only odd numbers are essential in this example), and 
\begin{equation}\label{eq:vhookexample}
\nu = \left(\frac{6}{34}, \frac{11}{85}, \frac{63}{3910}\right).
\end{equation}

We have the following multivariate normal limit law for the degrees of hooking networks.

\begin{theorem}\label{thm:hooking}
Let $\mathcal{X}_n = (X_{n,1}, X_{n,2}, \ldots, X_{n,r}),$ where $X_{n,i}$ is the number of vertices with essential degree $k_i$ in $\mathcal{G}_n$, where $\mathcal{G}_n$ is a hooking network grown from the set of blocks $\mathcal{C}$ using linear preferential attachment with parameters $\chi$ and $\rho$. Let $\lambda_1$ be defined as in (\ref{eq:lambdahook}) and let $\nu$ be the vector defined in (\ref{eq:vhookdef}) and (\ref{eq:vhook}). Then 
\begin{equation}\label{eq:hookconv}
n^{-1/2}(\mathcal{X}_n - n\lambda_1\nu) \xrightarrow{d} \mathcal{N}(0,\Sigma)
\end{equation} 
for some covariance matrix $\Sigma$.
\end{theorem}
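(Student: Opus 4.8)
The plan is to realize the degree evolution as a generalized P\'olya urn in the sense of Janson \cite{JANS:04} and then read off the limit law from the spectral data of the associated intensity matrix, exactly along the lines announced in Section \ref{sec:polya}. The first step is to encode vertices as balls: give each vertex of essential degree $k_j$ (for $j=1,\dots,r$) a ball of colour $j$ with activity $w_{k_j}$, so that drawing a ball proportionally to its activity reproduces the latch rule (\ref{eq:hooklatchprob}). Since degrees can only grow and may pass $k_r$, I would introduce one additional ``reservoir'' colour of activity $1$ whose ball count equals the total weight $\sum_u w_{\deg(u)}$ carried by all the remaining vertices, namely those of degree $>k_r$ together with the master hook, which by Proposition \ref{pro:adm} is the only vertex that may fail to be essential. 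With this bookkeeping the total activity of the urn equals the denominator of (\ref{eq:hooklatchprob}) at every step, so the draw probabilities are faithful.

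Next I would pin down the replacement rules. The crucial simplification is that the non-hook vertices of the attached block are created independently of which latch is drawn, contributing $f(k_j)$ new balls of colour $j$ in expectation at every step; the only latch-dependent effect is that the drawn vertex climbs in degree by $\deg(h_i)$, passing from colour $j$ to colour $j''$ when $k_{j''}=k_j+\deg(h_i)\le k_r$ is essential and into the reservoir otherwise, with the block probabilities recorded by $g$. Because the latch's weight increment $w_{k_j+\deg(h_i)}-w_{k_j}=\chi\deg(h_i)$ is independent of $k_j$, the reservoir dynamics do not depend on which reservoir ball is drawn, so representing untracked vertices by individual weight units is consistent. Computing the conditional expected increments then identifies the $(r+1)\times(r+1)$ intensity matrix $A$, and I would sanity-check it against the fixed point of $\mathbb E[\Delta X_{n,j}\mid\mathcal F_n]$ under the ansatz $X_{n,j}\approx\lambda_1\nu_j n$, which reproduces $(\lambda_1+w_{k_j})\nu_j=f(k_j)+\sum_{i<j}w_{k_i}g(k_j-k_i)\nu_i$, i.e.\ the recursion (\ref{eq:vhookdef}).

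The heart of the matter is the spectrum of $A$. Writing $a=(w_{k_1},\dots,w_{k_r},1)^{\mathsf T}$ for the activity vector, a direct computation shows that the expected one-step increment of the total weight is $\lambda_1$ regardless of the current configuration, which is equivalent to $a^{\mathsf T}A=\lambda_1 a^{\mathsf T}$; thus $a$ is a left eigenvector for the leading eigenvalue $\lambda_1$. I would then split $A=fa^{\mathsf T}+M$, where $fa^{\mathsf T}$ is the rank-one ``new vertex'' part and $M$ is the latch-move part, which is triangular with diagonal $-w_{k_1},\dots,-w_{k_r}$ (and reservoir entry $\chi\sum_i p_i\deg(h_i)$). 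Since the hyperplane $H=\ker a^{\mathsf T}$ is invariant and $A$ agrees with $M$ on $H$, the eigenvalues of $A$ other than $\lambda_1$ are exactly those of $M$ on $H$, namely $-w_{k_1},\dots,-w_{k_r}$. As $w_{k_j}=\chi k_j+\rho>0$ while $\lambda_1>0$, the eigenvalue $\lambda_1$ is simple and every other eigenvalue is negative, so in particular $\operatorname{Re}\lambda<\tfrac12\lambda_1$; this is precisely the gap that forces the Gaussian (rather than non-normal) regime, and it holds with no extra hypotheses.

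With the spectral picture established, the conclusion follows from Janson's central limit theorem of Section \ref{sec:polya}. The replacement vectors are bounded, because the blocks are fixed finite graphs, so the integrability conditions are immediate; the right eigenvector for $\lambda_1$ has strictly positive tracked coordinates proportional to $\lambda_1\nu_j$ (positivity of $\nu_j$ following from $f(k_1)>0$ and the essentiality of each $k_j$), which supplies both the law-of-large-numbers centering $n\lambda_1\nu$ and the non-degeneracy needed to apply the theorem. Since the subcritical eigenvalues lie strictly below $\lambda_1/2$, the theorem delivers the $n^{1/2}$ scaling and the convergence (\ref{eq:hookconv}) to $\mathcal N(0,\Sigma)$ with an explicit if unenlightening $\Sigma$, and the lone possibly-nonessential master hook changes each count by $O(1)$ and so leaves the limit untouched. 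I expect the principal obstacle to be not the spectral computation, which becomes transparent once the rank-one-plus-triangular splitting is spotted, but rather the careful construction of the reservoir colour so that the object is genuinely a Janson urn with the correct total activity even though the vertex degrees are unbounded.
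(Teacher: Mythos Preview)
Your proposal is correct and follows the paper's route: the urn you construct is exactly the one of Section~\ref{sec:vertsasballs}, the resulting intensity matrix is the matrix $A$ of (\ref{eq:A}) with $g(0)=0$, and the conclusion is read off from Theorem~\ref{thm:urns} once the spectrum is known.

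The one point of departure is the spectral computation. The paper proves Lemma~\ref{lem:eigenvalues} by explicit row and column operations on $A-\lambda I$ that reduce it to a bordered lower-triangular form, and then verifies the right eigenvector in Lemma~\ref{lem:eigenvector} by direct substitution. Your rank-one-plus-triangular splitting $A=fa^{\mathsf T}+M$ is more structural: once $a^{\mathsf T}A=\lambda_1 a^{\mathsf T}$ is checked, $A$ and the lower-triangular $M$ agree on $H=\ker a^{\mathsf T}$, so the non-leading eigenvalues of $A$ are those of $M|_H$; and the eigenvector equation becomes the triangular system $(\lambda_1 I-M)v_1=f$, which \emph{is} the recursion~(\ref{eq:vhookdef}). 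The only step you leave implicit is why the reservoir diagonal entry $c=\sum_{k\ge 1}\chi k\,g(k)$ of $M$, rather than one of the $-w_{k_j}$, is the eigenvalue living on the quotient $\mathbb{R}^{r+1}/H$; this follows from $a^{\mathsf T}M=(\lambda_1-a^{\mathsf T}f)\,a^{\mathsf T}=c\,a^{\mathsf T}$, so $M$ also has $a$ as a left eigenvector and the quotient eigenvalue is $c$. Both arguments yield the same list $\lambda_1,-w_{k_1},\dots,-w_{k_r}$, but your decomposition makes the structure visible at a glance and transfers unchanged to the bipolar setting.
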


\begin{remark}\label{rem:hookprop}
From (\ref{eq:hookconv}), we see an immediate weak law of large numbers, 
\begin{equation}\label{eq:LLNhook}
\mathcal{X}_n/n \xrightarrow{p} \lambda_1\nu. 
\end{equation}
Furthermore, since the number of blocks is finite and each block has a finite number of vertices, there is a constant $C$ such that $0 \leq X_{n,i} \leq Cn$ for all $i=1,2, \ldots, r$ and all $n$. Therefore, the random vectors $\mathcal{X}_n/n$ are uniformly integrable which, along with (\ref{eq:LLNhook}), imply
\[ \mathbb{E}\mathcal{X}_n/n \rightarrow \lambda_1 \nu.\]
The convergence in (\ref{eq:LLNhook}) also holds almost surely (see Remark \ref{rem:asurn}).
\end{remark}

%Therefore, from (\ref{eq:LLNhook}) and dominated convergence, we see that 
%\[ \mathbb{E}\mathcal{X}_n/n \rightarrow \lambda_1 \nu.\]
%\end{remark}\marginpar{This argument implies uniform integrability right? which along with conv. in prob. imply conv. in mean}

In some special cases, we can say even more about the convergence in (\ref{eq:hookconv}). For each block $G_i$, let $E(G_i)$ be the set of edges of $G_i$, and let
\begin{equation}\label{eq:sforhooks}
 s_i := \sum_{u \in V(G_i)}(\chi\deg(u) + \rho) - \rho = 2\chi |E(G_i)|+ \rho(|V(G_i)| - 1). 
 \end{equation}

\begin{corollary}\label{thm:hookingspecial}
Let $\mathcal{X}_n = (X_{n,1}, X_{n,2}, \ldots, X_{n,r}),$ where $X_{n,i}$ is the number of vertices with essential degree $k_i$ in $\mathcal{G}_n$, where $\mathcal{G}_n$ is a hooking network grown from the set of blocks $\mathcal{C}$ using linear preferential attachment with parameters $\chi$ and $\rho$. Let $\lambda_1$ be defined as in (\ref{eq:lambdahook}), let $\nu$ be the vector defined in (\ref{eq:vhookdef}) and (\ref{eq:vhook}), and let $s_i$ be defined as in (\ref{eq:sforhooks}) for each block $G_i$. Suppose that there exists a constant $s$ so that $s_i = s$ for all blocks $G_i$. Then the convergence (\ref{eq:hookconv}) holds in all moments. In particular, $n^{-1/2}(\mathbb{E}X_n - n\lambda_1\nu) \rightarrow 0$, and so $n\lambda_1\nu$ in (\ref{eq:hookconv}) can be replaced by $\mathbb{E}X_n$.
\end{corollary}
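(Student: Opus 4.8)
The plan is to deduce everything from a single structural observation: under the hypothesis $s_i=s$, the total activity of the P\'olya urn driving the proof of Theorem \ref{thm:hooking} grows deterministically, and this is exactly the extra ingredient that promotes Janson's distributional limit to convergence of all moments.

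First I would return to the urn built in Section \ref{sec:mainproofs}, in which a vertex of degree $k$ is a ball of activity $w_k=\chi k+\rho$, so that the urn's total activity is the denominator $\sum_{u}w_{\deg(u)}$ of the latch-selection probability (\ref{eq:hooklatchprob}). The key computation is the change in total activity caused by one step. When a copy of $G_i$ is hooked onto a latch $v$, the latch's activity rises by $\chi\deg(h_i)$ (its degree increases by $\deg(h_i)$), and each of the $|V(G_i)|-1$ new vertices $u$ contributes fresh activity $w_{\deg(u)}$; summing gives $\chi\deg(h_i)+\sum_{u\in V(G_i)\setminus\{h_i\}}w_{\deg(u)}=s_i$, which is independent of the latch $v$ and of its current degree. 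Averaging over the block choice recovers $\lambda_1=\sum_i p_i s_i$, consistent with (\ref{eq:lambdahook}).

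Under the hypothesis $s_i=s$ for every block, this per-step increment equals $s$ irrespective of the randomly chosen block, so after $n$ steps the total activity is the deterministic quantity $S_0+ns$, and in particular $s=\lambda_1$. This is precisely the balance condition in Janson's framework (Section \ref{sec:polya}) guaranteeing an almost surely constant activity increment per draw, under which his central limit theorem for generalized P\'olya urns holds with convergence of all mixed moments rather than only in distribution. Applying that strengthened statement from \cite{JANS:04} yields that (\ref{eq:hookconv}) converges in all moments. Moment convergence in turn forces the first moment of the rescaled vector to converge to the mean of the Gaussian limit, which is $0$; hence $n^{-1/2}(\mathbb{E}X_n-n\lambda_1\nu)\to 0$, and writing $n^{-1/2}(X_n-\mathbb{E}X_n)=n^{-1/2}(X_n-n\lambda_1\nu)-n^{-1/2}(\mathbb{E}X_n-n\lambda_1\nu)$ shows the centering $n\lambda_1\nu$ may be replaced by $\mathbb{E}X_n$.

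The delicate point, and the step I expect to require the most care, is the first one: I must confirm that the activity bookkeeping producing the increment $s_i$ applies to the \emph{entire} urn used to prove Theorem \ref{thm:hooking} --- including any auxiliary colours introduced to track degrees exceeding $k_r$ and the special treatment of the master hook via Proposition \ref{pro:adm} --- so that the deterministic-total condition holds for the urn to which Janson's moment result is actually being invoked, and not merely for the coordinates $X_{n,1},\dots,X_{n,r}$ being counted. All remaining hypotheses of Janson's theorem (a real simple leading eigenvalue and the spectral gap $\operatorname{Re}\lambda_2<\lambda_1/2$) are already in force from the proof of Theorem \ref{thm:hooking}, so no further verification is needed there.
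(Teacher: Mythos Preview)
Your approach is essentially identical to the paper's: compute that attaching block $G_i$ raises the urn's total activity by exactly $s_i$ independently of the latch (the paper makes explicit the three sub-cases for how the latch's ball is updated, which confirms your ``delicate point'' that the bookkeeping survives the special-type replacements and the master-hook convention), conclude that the urn is balanced when all $s_i=s$, and invoke the moment-convergence theorem for balanced generalized P\'olya urns. The one correction is the reference: the moment-convergence upgrade for balanced urns is not in \cite{JANS:04} but in the later Janson--Pouyanne paper \cite{JAPO:18} (cf.\ Remark~\ref{rem:moms}), which is what the paper actually cites.
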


There are several cases where Corollary \ref{thm:hookingspecial} applies. An obvious example is when there is only one block to choose from. Other examples include when $\chi = 0$ and all the graphs have the same number of vertices, or when $\rho = 0$ and all the graphs have the same number of edges. 

To compare Theorem \ref{thm:hooking} with previous results on random recursive trees and preferential attachment trees, consider a hooking network grown from $K_2$ as the only block and where $\chi =0$ and $\rho =1$; as discussed earlier this produces random recursive trees. In this case, $f(1) = 1$, $g(1) = 1$, and $\lambda_1 = 1$, and so for any positive integer $r$ the vector $\nu = (\nu_1, \ldots, \nu_r)$ defined in (\ref{eq:vhook}) is given by 
\[ \nu = \left(\frac{1}{2}, \frac{1}{4}, \frac{1}{8}, \ldots, \frac{1}{2^r}\right).\]
We see that Theorem \ref{thm:hooking} extends previous results on random recursive trees \cite{MASM:92,JANS:05}. 

More generally, suppose that we look at a preferential attachment tree, where the latch $v$ is chosen with probability proportional to $\chi\deg{v} + \rho$. We once again have $f(1) = 1$ and $g(1) = 1$, and we have that $\lambda_1 = w_1 + \chi = w_2$. We see that $\nu_1 = 1/(w_2 + w_1)$ and by following the recursion of (\ref{eq:vhookdef}) we see that for any $i=2,3, \ldots, $ $\nu_i$ is given by 
\begin{equation}\label{eq:preftree}
\nu_i = \frac{w_{i-1}}{w_2 + w_i}\left(\prod_{j=2}^{i-1}\frac{w_{j-1}}{w_2 + w_j}\right)\cdot \nu_1 = \frac{1}{w_2 + w_1}\prod_{j=2}^{i}\frac{w_{j-1}}{w_2 + w_j}.
\end{equation} 
In particular when $\chi =1$ and $\rho =0$, then $n\lambda_1\nu_i =\frac{4n}{i(i+1)(i+2)}$, and so we see that Theorem \ref{thm:hooking} extends previous results on random plane-oriented recursive trees \cite{MASS:93,JANS:05}, while (\ref{eq:preftree}) along with Theorem \ref{thm:hooking} is the result stated in \cite[Theorem 12.2]{HOJS:17}. 

\begin{remark}
In the literature on random recursive trees and preferential attachment trees, the choice of the latch is usually made proportionally to $\chi\deg^+(v) + \rho'$, where $\deg^+(v)$ is the number of children of $v$. But we can simply let $\rho = \rho' - \chi$ to get the same model, and replace $w_k$ with $w_{k-1}' = \chi(k-1) + \rho'$ so that (\ref{eq:preftree}) resembles more the statements of the previous results \cite{MASM:92, MASS:93, JANS:05, HOJS:17}. The only vertex where this does not translate is the root (or master hook) of the network, since $\deg(H) = \deg^+(H)$ in this case, but see Remarks \ref{rem:naught} and \ref{rem:naughtagain} below for why this does not affect the limiting distribution. 
\end{remark}

%---------- BIPOLAR NETWORK THEOREM --------%
\subsubsection{Main results for bipolar networks}\label{sec:bipolarthm}

Let $\mathcal{C} = \{B_1, B_2, \ldots, B_m\}$ be a set of blocks each with a north pole $N_i$ and a south pole $S_i$ identified, and let $\mathcal{B}_0, \mathcal{B}_1, \mathcal{B}_2, \ldots$ be a sequence of bipolar networks grown from $\mathcal{C}$, with the master source labelled $N$ and the master sink labelled $S$. The latches $v$, arcs $(v,u)$, and blocks $B_i$ are chosen in the manner laid out in Section \ref{sec:networks} (by linear preferential attachment with parameters $\chi$ and $\rho$ for the latch, uniformly at random amongst arcs leading out of $v$ for $(v,u)$, and according to its probability $p_i$ for $B_i$). For a positive integer $r$, let 
\[ k_1 < k_2 < \cdots < k_r \]
be the first $r$ essential outdegrees. We introduce similar notations as for the hooking network case. Again, recall that for a positive integer $k$, we let $w_k = \chi k + \rho$. For each block $B_i \in \mathcal{C}$, let $V(B_i)$ be its vertex set. For a positive integer $k$, define 
\begin{equation}\label{eq:fbip}
 f(k) := \sum_{B_i \in \mathcal{C}} p_i \cdot|\{ v \in V(B_i) \setminus \{N_i, S_i\} : \deg^+(v) = k\}|\end{equation}
and for a nonnegative integer $k$, define 
\begin{equation}\label{eq:gbip}
 g(k) := \sum_{\substack{B_i \in \mathcal{C} \\ \deg^+(N_i) = k+1}}\!\!\!\!\!\!\!\!\!p_i.
 \end{equation}
The value $f(k)$ is the expected number of new vertices of outdegree $k$ added at any step, and $g(k)$ is the probability that the outdegree of a latch $v$ is increased by $k$ when $(v,u)$ is replaced with a block (note here that $g(0) \neq 0$ if there is a block whose north pole has outdegree 1). For the blocks of Figure \ref{fig:bipolarseeds} we have that $f(1) = 1$, $f(2) = 1$, and $f(3) = 1/2$, while $g(0) = 1/2$ and $g(1) = 1/2$. 
%Note that if $f(j) \neq 0$, then $j$ is an admisisble outdegree. 
%Again we have that $\sum_{j \geq 0} g(j) = 1$ since $p_1 + p_2 + \cdots + p_m = 1$.
For a set of blocks $\mathcal{C}$, define 
\begin{equation}\label{eq:lambdabip}
\lambda_1 :=  \sum_{k \geq 1} \left(w_kf(k) + \chi k g(k)\right).
\end{equation}
The value $\lambda_1$ is the expected change in the denominator of (\ref{eq:biplatchprob}) at each step in the growth of the bipolar network. For our running example of bipolar networks grown from the blocks in Figure \ref{fig:bipolarseeds}, if we let $\chi =0$ and $\rho =1$, then 
\begin{equation}\label{eq:lambdabipolarexample}
\lambda_1 = \frac{5}{2}.
\end{equation}
Let $\psi_1 := f(k_1)/(\lambda_1 + w_{k_1}(1 - g(0)))$, and define recursively for $i=2,\ldots,r$
\begin{equation}\label{eq:vbipolardef}
 \psi_i := \frac{1}{\lambda_1 + w_{k_i}(1 - g(0))}\left( f(k_i) + \sum_{j=1}^{i-1}w_{k_j}g(k_i - k_j)\psi_j\right).
 \end{equation}
 The value $\lambda_1\psi_i$ is the limit of the expected proportion of vertices with outdegree $k_i$ (see Remark \ref{rem:bipprop} below). Define 
\begin{equation}\label{eq:vbipolar}
\psi := (\psi_1,\psi_2, \ldots,\psi_r ).
\end{equation}
For our running example of bipolar networks grown from the blocks in Figure \ref{fig:bipolarseeds} with $\chi =0$ and $\rho =1$, and if we let $r =3$, then the first 3 essential outdegrees are $1,2,3$, and 
\begin{equation}\label{eq:vbipolarexample}
\psi = \left(\frac{1}{3}, \frac{7}{18}, \frac{25}{108}\right).
\end{equation}

We have the following multivariate normal limit law for the outdegrees in the growth of bipolar networks.

\begin{theorem}\label{thm:bipolar}
Let $\mathcal{Y}_n = (Y_{n,1}, Y_{n,2}, \ldots, Y_{n,r})$, where $Y_{n,i}$ is the number of vertices with outdegree $k_i$ in $\mathcal{B}_n$, where $\mathcal{B}_n$ is a bipolar network grown from the set of blocks $\mathcal{C}$ using linear preferential attachment with parameters $\chi$ and $\rho$. Let $\lambda_1$ be defined as in (\ref{eq:lambdabip}) and let $\psi$ be the vector defined in (\ref{eq:vbipolardef}) and (\ref{eq:vbipolar}).
Then 
\begin{equation}\label{eq:bipolarconv}
n^{-1/2}(\mathcal{Y}_n - n\lambda_1\psi) \xrightarrow{d} \mathcal{N}(0,\Sigma)
\end{equation}
for some covariance matrix $\Sigma$. 
\end{theorem}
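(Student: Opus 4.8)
The plan is to realize the growth of $\mathcal{B}_n$ as a generalized P\'olya urn in the sense of Janson (Section \ref{sec:polya}) and then read the central limit theorem off the spectral data of its intensity matrix. I would take the balls to be the vertices of $\mathcal{B}_n$ other than the master sink $S$, colour each ball by the outdegree of the corresponding vertex, and give a ball of colour $k$ the activity $w_k=\chi k+\rho>0$; an activity-proportional draw then reproduces exactly the latch rule (\ref{eq:biplatchprob}). Since only the first $r$ essential outdegrees are tracked, I would work with the finite colour set $k_1<\cdots<k_r$ and use Proposition \ref{pro:adm} to discard the at-most-one vertex (the master source) of nonessential outdegree, which cannot affect the $n^{-1/2}$-scale asymptotics.

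Next I would determine the replacement rules, the content of Sections \ref{sec:vertsasballs}--\ref{sec:mainproofs}. The decisive simplification is that the block added at each step is chosen i.i.d.\ with probabilities $p_i$, independently of the latch and of the chosen arc: hence, when a ball of colour $k_j$ is drawn, its outdegree increases by $k'$ with probability $g(k')$ (so colour $k_j$ is vacated with probability $1-g(0)$ and, for $k_i>k_j$, becomes colour $k_i$ with probability $g(k_i-k_j)$), while the new internal vertices of the attached block contribute in expectation $f(k)$ fresh balls of colour $k$, and the sink endpoint $u$ retains its outdegree. From these rules I would assemble the intensity matrix $A$ and check by direct computation that the defining recursion (\ref{eq:vbipolardef}) is exactly the equation identifying $\psi$ as the right eigenvector of $A$ for its largest eigenvalue $\lambda_1$ of (\ref{eq:lambdabip}); as $\lambda_1$ is simultaneously the mean per-step increment of the total activity, i.e.\ of the denominator of (\ref{eq:biplatchprob}), it is the dominant (Perron) eigenvalue and $\psi$ its positive eigenvector, giving $\mathcal{Y}_n/n\to\lambda_1\psi$.

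To get the Gaussian limit at scale $n^{-1/2}$, the hypothesis to verify in Janson's theorem is the spectral gap $\operatorname{Re}\lambda_2<\tfrac12\lambda_1$, where $\lambda_2$ is the eigenvalue of $A$ of second-largest real part. Here I would exploit that outdegrees are non-decreasing, so the transition part of $A$ is triangular in the ordering $k_1<\cdots<k_r$ with diagonal entries of the form $-w_{k_i}(1-g(0))\le 0$; as established in Section \ref{sec:A}, together with the Perron value this exhausts the spectrum, so every eigenvalue other than $\lambda_1$ has non-positive real part and the gap $\operatorname{Re}\lambda_2\le 0<\tfrac12\lambda_1$ holds automatically. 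I would then record the remaining hypotheses---finitely many colours, strictly positive activities, bounded and hence all-moment replacements (the blocks being finite), tenability, and the irreducibility that the essentiality of the tracked outdegrees is designed to guarantee---and invoke the theorem of Section \ref{sec:polya} to conclude (\ref{eq:bipolarconv}).

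The hard part is the urn construction rather than the spectral algebra. A priori the outdegrees, and so the colours, are unbounded, and the activity-weighted latch rule depends on the total activity of the \emph{entire} network, including vertices of outdegree exceeding $k_r$, which nonetheless spawn new low-outdegree vertices at every step. The real work is to show that, thanks to the i.i.d.\ block choice and the upward-only transitions, the first $r$ coordinates of this a priori infinite urn are governed by a finite intensity matrix of the stated form; once that reduction is justified, identifying $\lambda_1$ and $\psi$ and verifying the gap are routine and proceed in close parallel to the hooking-network argument behind Theorem \ref{thm:hooking}.
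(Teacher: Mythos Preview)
Your approach is the paper's: model the outdegree evolution as a generalized P\'olya urn, identify $\lambda_1$ and $\psi$ with the Perron data of the intensity matrix, and invoke Janson's theorem once the spectral gap is verified. The replacement description, the recursion for $\psi$, and the conclusion that the subdominant eigenvalues are $w_{k_i}(g(0)-1)\le 0$ all match.

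The gap is the one you flag in your final paragraph but do not close. You cannot simply ``work with the finite colour set $k_1<\cdots<k_r$'' and discard the master source: the draw probabilities (\ref{eq:biplatchprob}) depend on the total activity of \emph{all} vertices, including those of outdegree $>k_r$, and such vertices continue to be chosen as latches and to spawn fresh low-outdegree balls. The first $r$ coordinates of the infinite urn do not form a P\'olya urn on their own, so no projection argument is available. The paper's device (Section \ref{sec:vertsasballs}) is to add an explicit $(r{+}1)$-th colour, a special type~$\ast$ with activity $1$, and to represent each vertex of outdegree $k>k_r$ (and the master source) by $w_k$ balls of type~$\ast$. This makes the total activity, and hence every draw probability, agree exactly with the network; the resulting $(r{+}1)\times(r{+}1)$ intensity matrix is the matrix $A$ of (\ref{eq:A}), whose Perron eigenvector restricted to the first $r$ coordinates is $\psi$. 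Without this extra colour, the urn you describe is not closed under its own dynamics, and the appeal to Theorem \ref{thm:urns} is not yet licensed.

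One smaller point: the intensity matrix $A$ is not itself triangular; the $f(k_i)$ terms contribute nonzero entries everywhere. The triangularity you invoke appears only after the row and column operations in the proof of Lemma \ref{lem:eigenvalues}, which strip off the rank-one ``new vertices'' piece and leave the genuinely lower-triangular ``latch transition'' piece with diagonal $w_{k_i}(g(0)-1)$.
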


\begin{remark}\label{rem:bipprop}
With the same reasoning as in Remark \ref{rem:hookprop}, we have a weak law of large numbers 
\begin{equation}\label{eq:LLNbip}
 \mathcal{Y}_n/n \xrightarrow{p} \lambda_1\psi
 \end{equation}
and a convergence of the means
\[ \mathbb{E}\mathcal{Y}_n/n \rightarrow \lambda_1 \psi.\]
The convergence in (\ref{eq:LLNbip}) also holds almost surely (see Remark \ref{rem:asurn}).
\end{remark}
%\end{remark}\marginpar{This argument implies uniform integrability right? which along with conv. in prob. imply conv. in mean}

Once again, we can say something more about the convergence in (\ref{eq:bipolarconv}) in certain cases.  For each block $B_i$, let $E(B_i)$ be the set of arcs of $B_i$, and let
\begin{equation}\label{eq:sforbips}
 s_i := \sum_{u \in V(B_i)}\!\!\!(\chi\deg^+(u) + \rho) - \chi - \rho = \chi(|E(B_i)|-1) + \rho(|V(B_i)| - 1). 
 \end{equation} 

\begin{corollary}\label{thm:bipolarspecial}
Let $\mathcal{Y}_n = (Y_{n,1}, Y_{n,2}, \ldots, Y_{n,r}),$ where $Y_{n,i}$ is the number of vertices with essential outdegree $k_i$ in $\mathcal{B}_n$, where $\mathcal{B}_n$ is a bipolar network grown from the set of blocks $\mathcal{C}$ using linear preferential attachment with parameters $\chi$ and $\rho$. Let $\lambda_1$ be defined as in (\ref{eq:lambdabip}), let $\psi$ be the vector defined in (\ref{eq:vbipolardef}) and (\ref{eq:vbipolar}), and let $s_i$ be defined as in (\ref{eq:sforbips}) for each block $B_i$. Suppose that there exists a constant $s$ so that $s_i = s$ for all blocks $B_i$. Then the convergence (\ref{eq:bipolarconv}) holds in all moments. In particular, $n^{-1/2}(\mathbb{E}Y_n - n\lambda_1\psi) \rightarrow 0$, and so $n\lambda_1\psi$ in (\ref{eq:bipolarconv}) can be replaced by $\mathbb{E}Y_n$.
\end{corollary}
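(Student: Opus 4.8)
The plan is to reuse the generalized P\'olya urn built for the proof of Theorem~\ref{thm:bipolar} and to show that the extra hypothesis $s_i=s$ for all blocks $B_i$ makes this urn \emph{balanced}, so that the results of \cite{JANS:04} recalled in Section~\ref{sec:polya} promote the distributional convergence in (\ref{eq:bipolarconv}) to convergence in all moments. The argument is the exact analogue of the one for Corollary~\ref{thm:hookingspecial}, and I would present the two together.

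First I would recall that the total activity of the urn attached to $\mathcal{B}_n$ is, up to an additive constant, the denominator $W_n:=\sum_{u\in V(\mathcal{B}_n)\setminus\{S\}}(\chi\deg^+(u)+\rho)$ of (\ref{eq:biplatchprob}), since each vertex eligible to be a latch contributes its selection weight $\chi\deg^+(u)+\rho$. The crux is to compute the one-step increment of this activity. Replacing an arc $(v,u)$ by a copy of $B_i$ deletes one arc and creates $|E(B_i)|$ arcs, and introduces $|V(B_i)|-2$ genuinely new vertices (all of $B_i$ except the two poles $N_i,S_i$, which are fused onto the already-present $v$ and $u$). Because $\deg^+(S)=0$ and the outdegree change of the latch equals $\deg^+(N_i)-1$ regardless of its current outdegree, a short calculation gives that the activity increases by $\chi(|E(B_i)|-1)+\rho(|V(B_i)|-2)=s_i-\rho$, independently of which latch $v$ and which out-arc $(v,u)$ were chosen, and (importantly) of whether $u$ is the master sink. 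In the general setting of Theorem~\ref{thm:bipolar} this increment is random because the block index $i$ is random, but under the hypothesis $s_i=s$ it equals the deterministic constant $s-\rho$ at every step; hence $W_n=W_0+(s-\rho)n$ is deterministic and the urn is balanced.

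Given balancedness, together with the facts that the replacements are bounded (finitely many finite blocks) and that the spectral hypotheses on the intensity matrix were already verified in the proof of Theorem~\ref{thm:bipolar} (Section~\ref{sec:A}), I would invoke the moment-convergence part of Janson's theorems from Section~\ref{sec:polya}: for a balanced urn with bounded replacements the central limit theorem holds in all mixed moments. This yields the first conclusion. The ``in particular'' statement is then automatic: moment convergence forces the mean of $n^{-1/2}(\mathcal{Y}_n-n\lambda_1\psi)$ to converge to the mean of $\mathcal{N}(0,\Sigma)$, which is $0$, i.e.\ $n^{-1/2}(\mathbb{E}\mathcal{Y}_n-n\lambda_1\psi)\to 0$; combining this with (\ref{eq:bipolarconv}) through the converging-together (Slutsky) lemma lets one recenter by $\mathbb{E}\mathcal{Y}_n$ in place of $n\lambda_1\psi$.

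The main obstacle is not probabilistic but is the bookkeeping behind the balance computation: one must identify the urn's total activity with $W_n$ up to the additive constant hidden in the difference between the $-\chi-\rho$ of (\ref{eq:sforbips}) and the actual per-step increment $s_i-\rho$, and confirm that the increment is truly deterministic (in particular in the degenerate case where the chosen out-arc points at the master sink). Once this is in hand, together with the matrix analysis already carried out for Theorem~\ref{thm:bipolar}, the corollary follows by a direct citation of the urn theorems, exactly as for the hooking networks in Corollary~\ref{thm:hookingspecial}.
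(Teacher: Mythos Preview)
Your proposal is correct and follows essentially the same route as the paper: compute the one-step change in total activity of the urn, observe that it depends only on the block $B_i$ (not on the latch or on whether the head of the removed arc is the master sink), conclude that the urn is balanced when all $s_i$ coincide, and then invoke the Janson--Pouyanne moment-convergence theorem for balanced urns cited in Remark~\ref{rem:moms}. Your derivation of the ``in particular'' clause via convergence of first moments and Slutsky is also exactly what is intended.

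In fact your bookkeeping is slightly sharper than the paper's own proof: the activity increment is indeed $\chi(|E(B_i)|-1)+\rho(|V(B_i)|-2)=s_i-\rho$, whereas the paper's displayed line asserts it equals $s_i=\chi(|E(B_i)|-1)+\rho(|V(B_i)|-1)$. The discrepancy is a harmless slip (if all $s_i$ are equal then so are all $s_i-\rho$, so balancedness is unaffected), and your explicit acknowledgement of the additive-constant mismatch between $s_i$ and the actual increment is a good thing to keep.
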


%\begin{remark}
%We could also choose to study the indegrees of bipolar networks instead of outdegrees. However, the choice of the latch $v$ would have to be made proportionally to $\chi \deg^-(v) + \rho$ (instead of $\chi \deg^+(v) + \rho$), and the arc to be replaced with a block would instead have be chosen uniformly at random amongst the arcs leading into $v$ (instead of leading out of $v$) for our methods to work. In that case, results analogous to Theorem \ref{thm:bipolar} (or Corollary \ref{thm:bipolarspecial} if applicable) exist. The multivariate normal limit laws for the indegrees of such networks grown from the blocks $\mathcal{C} = \{B_1, \ldots, B_m\}$ are the same as the multivariate normal limit laws for the outdegrees of bipolar networks grown from $\mathcal{C}' = \{B_1', \ldots, B_m'\},$ where the arcs of $B_i$ have been reversed to make $B_i'$.
%\end{remark}

\begin{remark} We could choose to study the indegrees of bipolar networks instead. Consider networks $\mathcal{B}_0', \mathcal{B}_1', \mathcal{B}_2', \ldots$ grown from the blocks $\mathcal{C} = \{B_1', \ldots, B_m'\}$. Now we choose the latch $v$ proportionally to $\chi \deg^-(v) + \rho$ (instead of $\chi \deg^+(v) + \rho$), and the arc to be replaced with a block is chosen uniformly at random amongst the arcs leading into $v$ (instead of leading out of $v$). The multivariate normal limit law for the indegree distribution of such networks is the same as that for the outdegree distribution of bipolar networks $\mathcal{B}_0, \mathcal{B}_1, \mathcal{B}_2, \ldots$ grown in the manner laid out in Section \ref{sec:bipolar} from the blocks $\mathcal{C} = \{B_1, \ldots, B_m\}$, where the arcs of $B_i'$ are reversed to make $B_i$. 
\end{remark}

%----------------------------------------%
%---------- POLYA URNS ----------%
%----------------------------------------%
\section{P\'olya urns}\label{sec:polya}

A generalized P\'olya urn process $(X_n)_{n=0}^{\infty}$ is defined as follows. There are $q$ types (or colours) $1, 2 \ldots, q$ of balls and for each vector $X_{n} = (X_{n,1}, X_{n,2}, \ldots, X_{n,q})$, the entry $X_{n,i} \geq 0$ is the number of balls of type $i$ in the urn at time $n$, starting with a given (random or not) vector $X_0$. Each type $i$ is assigned an activity $a_i \in \mathbb{R}_{\geq 0}$ and a random vector $\xi_i = (\xi_{i,1}, \xi_{i,2}, \ldots, \xi_{i,q})$ satisfying $\xi_{i,j} \geq 0$ for $i \neq j$ and $\xi_{i,i} \geq -1$. At each time $n \geq 1$, a ball is drawn at random so that the probability of choosing a ball of type $i$ is 
\[ \frac{a_iX_{n-1,i}}{\sum_{j=1}^q a_j X_{n-1,j}}.\]
If the drawn ball is of type $i$ it is replaced along with $\Delta X_{n,j}$ balls of type $j$ for each $j=1, \ldots,q$, where the vector $\Delta X_{n} = (\Delta X_{n,1}, \Delta X_{n,2}, \ldots, \Delta X_{n,q})$ has the same distribution as $\xi_i$ and is independent of everything else that has happened so far. We allow for $\Delta X_{n,i} = -1$, in which case the drawn ball is not replaced. 

The {\em intensity matrix} of the P\'olya urn is the $q \times q$ matrix 
\[ A:= \left(a_j\mathbb{E}\xi_{j,i}\right)_{i,j = 1}^q.\]
By the choice of $\xi_{i,j}$, the matrix $\alpha I + A$ has non-negative entries for a large enough $\alpha$, and so by the standard Perron-Frobenius theory, $A$ has a real eigenvalue $\lambda_1$ such that all other eigenvalues $\lambda \neq \lambda_1$ satisfy $\text{Re}\lambda < \lambda_1$. 

The following assumptions (A1)--(A7) are used in \cite{JANS:04}. In the interpretation of balls in an urn, the random vectors $\xi_i$ and $\Delta X_n$ are integer-valued. However, for our applications, this is not necessarily the case, which is why our assumption (A1) below takes a slightly different form from the standard assumption (A1) in \cite{JANS:04}, taking instead the form discussed in \cite[Remark 4.2]{JANS:04} (note the indices of the variables in (A1) below). A type $i$ is called {\em dominating} if in an urn starting with a single ball of type $i$, there is a positive probability that a ball of type $j$ can be found in the urn at some time for every other type $j$. If every type is dominating, then the urn and its intensity matrix $A$ are {\em irreducible}. 
\begin{enumerate}
\item[(A1)] For each $i$, either 
\begin{enumerate}
\item there is a real number $d_i > 0$ such that $X_{0,i}$ and $\xi_{1,i}, \xi_{2,i}, \ldots, \xi_{q,i}$ are multiplies of $d_i$ and $\xi_{i,i} \geq -d_i$, or 
\item $\xi_{i,i} \geq 0$.
\end{enumerate}
\item[(A2)] $\mathbb{E}(\xi_{i,j}^2) < \infty$ for all $i,j \in \{1,2,\ldots,q\}$.
\item[(A3)] The largest eigenvalue $\lambda_1$ of $A$ is positive. 
\item[(A4)] The largest eigenvalue $\lambda_1$ of $A$ is simple. 
\item[(A5)] There exists a dominating type $i$ with $X_{0,i} >0$. 
\item[(A6)] $\lambda_1$ is an eigenvalue of the submatrix of $A$ given by the dominating types.
\item[(A7)] At each time $n \geq 1$, there exists a ball of dominating type. 
\end{enumerate}

In the P\'olya urns we use, it is obvious that (A1) and (A2) hold. Our intensity matrices are also irreducible, and so (A5) and (A6) hold trivially, while the Perron-Frobenius theorem along with irreducibility guarantee that (A3) and (A4) hold. Our urns always have balls of positive activity, and so (A7) holds by the irreducibility of the urns. 

Denote column vectors as $v$ with $v'$ as its transpose. The transpose of a matrix $A$ is also denoted as $A'$. Let $a = (a_1, \ldots, a_q)'$ denote the vector of activities, and let $u_1'$ and $v_1$ be the left and right eigenvectors of $A$ corresponding to the eigenvalue $\lambda_1$ normalized so that $a \cdot v_1 = a'v_1 = v_1'a = 1$ and $u_1 \cdot v_1 = u_1'v_1 = v_1'u_1 = 1$. Define $P_{\lambda_1} = v_1 u_1'$ and $P_I = I_q - P_{\lambda_1}$. Define the matrices 
\[B_i := \mathbb{E}(\xi_i \xi_i')\]
for every $i = 1, \ldots, q$, denote $v_1 = (v_{1,1}, v_{1,2}, \ldots, v_{1,q})'$, and define the matrix 
\begin{equation}\label{eq:B}
B := \sum_{i=1}^q v_{1,i}a_iB_i.
\end{equation}
In the case where $\text{Re} \lambda < \lambda_1/2$ for every eigenvalue $\lambda \neq \lambda_1$, define 
\begin{equation}\label{eq:sigma1}
 \Sigma_I := \int_{0}^{\infty} P_I e^{sA}B e^{sA'}P_I'e^{-\lambda_1s}ds,
 \end{equation}
where $e^{tA} = \sum_{j=0}^{\infty}t^jA^j/j!$. The result we use from \cite{JANS:04} guarantees that if (A1)--(A7) hold and $\text{Re}\lambda <~ \lambda_1/2$ for all eigenvalues $\lambda \neq \lambda_1$, then $n^{-1/2}(X_n - n\mu) \xrightarrow{d} \mathcal{N}(0,\Sigma)$ for some $\mu = (\mu_1, \ldots, \mu_q)$ and $\Sigma = (\sigma_{i,j})_{i,j=1}^q$. We state below results from \cite{JANS:04} (and gathered in \cite[Theorem 4.1]{HOJS:17}) which give the conditions for convergence to multivariate normal distributions as well as the values of $\mu$ and $\Sigma$. 

%We use lemmas stated in \cite{JANS:04} and gathered in \cite[Theorem 4.1]{HOJS:17} to calculate $\mu$ and $\Sigma$. 

\begin{theorem}[{\cite[Theorem 3.22 and Lemmas 5.4 and 5.3(i)]{JANS:04}}]\label{thm:urns}
Assume (A1)--(A7) and that the right and left eigenvectors corresponding to $\lambda_1$ are normalized as above. Assume that $\text{Re} \lambda < \lambda_1/2$ for each eigenvalue $\lambda \neq \lambda_1$. 
\begin{enumerate}
\item[(i)] Then, as $n \to \infty$, 
\begin{equation}\label{eq:conv}
n^{-1/2}(X_n - n\mu) \xrightarrow{d} \mathcal{N}(0, \Sigma)
\end{equation}
with $\mu = \lambda_1v_1$ and some covariance matrix $\Sigma$.
\item[(ii)] Suppose further that, for some $c >0$, 
\[a \cdot \mathbb{E}(\xi_i) = c\]
for every $i=1,\ldots,q$. Then the covariance matrix is given by $\Sigma = c\Sigma_I$, where $\Sigma_I$ is defined in (\ref{eq:sigma1}). 
\item[(iii)] Suppose that (ii) holds and that the matrix $A$ is diagonalizable, and let $\{u_i'\}_{i=1}^q$ and $\{v_i'\}_{i=1}^q$ be dual bases of left and right eigenvectors respectively, i.e., $u_i'A = \lambda_iu_i'$, $Av_i = \lambda_iv_i$, and $u_i'v_j = \delta_{i,j}$. Then the covariance matrix $\Sigma$ is given by 
\begin{equation}\label{eq:sigmadiag}
 \Sigma = c\sum_{j,k=2}^q \frac{u_j'Bu_k}{\lambda_1 - \lambda_j - \lambda_k}v_jv_k',
 \end{equation}
where $B$ is defined in (\ref{eq:B}).
\end{enumerate}
\end{theorem}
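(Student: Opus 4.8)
The plan is to prove Theorem \ref{thm:urns} by the classical Athreya--Karlin embedding of the discrete urn into a continuous-time multitype Markov branching process, followed by a functional central limit theorem for that process and a transfer back to discrete time through the random jump times. First I would construct a continuous-time process $\{Y(t)\}_{t\ge 0}$, $Y(t) = (Y_1(t),\dots,Y_q(t))$, by equipping each ball of type $i$ with an independent exponential clock of rate $a_i$; when the clock of a type-$i$ ball rings, that ball is drawn and the urn is updated by an independent copy of $\xi_i$, exactly as in the discrete dynamics. Assumption (A2) supplies finite second moments for the offspring, (A1) handles the lattice/non-lattice structure, and (A5)--(A7) together with irreducibility keep a dominating type alive at all times so the Perron eigenvalue $\lambda_1$ is attained on the living types. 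Observing $Y$ only at its successive ring times $\tau_0 < \tau_1 < \cdots$ recovers the embedded skeleton $X_n = Y(\tau_n)$ in distribution, so it suffices to prove a CLT for $Y(t)$ and then convert the time scale.

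For the first-order behaviour I would use that the mean vector $m(t) := \mathbb{E}\,Y(t)$ solves $\dot m(t) = A\,m(t)$, so $m(t) = e^{tA}m(0)$; by (A3)--(A4) and Perron--Frobenius, $e^{-\lambda_1 t}e^{tA} \to P_{\lambda_1} = v_1 u_1'$. In each left-eigendirection the process $e^{-\lambda_j t} u_j' Y(t)$ is a martingale, and the eigenvalue gap makes the Perron component $e^{-\lambda_1 t} u_1' Y(t)$ converge in $L^2$ and almost surely to a scalar limit $W \ge 0$, yielding $e^{-\lambda_1 t} Y(t) \to W v_1$. This is the continuous-time shadow of the law of large numbers $X_n/n \to \mu = \lambda_1 v_1$; in the deterministic-activity case of part (ii) the limit $W$ is in fact a constant, which ultimately removes one source of randomness from the variance.

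The fluctuation analysis is the heart of part (i). I would write $Y(t) = W e^{\lambda_1 t} v_1 + R(t)$ and study the projected remainder $P_I Y(t)$, which is driven by the centred jump martingale whose predictable quadratic variation is assembled from the second-moment matrices $B_i = \mathbb{E}(\xi_i\xi_i')$, weighted as in $B = \sum_i v_{1,i} a_i B_i$ of (\ref{eq:B}). A martingale functional CLT then gives that $e^{-\lambda_1 t/2} P_I Y(t)$ is asymptotically Gaussian with covariance
\[
\Sigma_I = \int_0^\infty P_I\, e^{sA} B\, e^{sA'}\, P_I'\, e^{-\lambda_1 s}\, ds,
\]
and the hypothesis $\text{Re}\,\lambda < \lambda_1/2$ for every $\lambda \ne \lambda_1$ is exactly what makes this integral converge, since the integrand decays like $e^{(2\text{Re}\,\lambda - \lambda_1)s}$ on the range of $P_I$. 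To reach the discrete statement (\ref{eq:conv}) I would invert the clock: the total activity $a'X_n$ grows linearly, $a'X_n/n \to a'\mu = \lambda_1$, whereas in continuous time it grows like $e^{\lambda_1 t}$, forcing $e^{\lambda_1 \tau_n} \asymp n$ and hence $e^{\lambda_1 \tau_n/2} \asymp n^{1/2}$, which converts the continuous-time scaling into the stated $n^{-1/2}$.

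For parts (ii) and (iii) I would specialize. Under $a\cdot\mathbb{E}(\xi_i) = c$, the total activity $a'X_n$ increases by exactly $c$ at each step, so $\tau_n$ becomes an (asymptotically) deterministic time change; the random factor $W$ collapses to a constant and the scaling produces precisely $\Sigma = c\,\Sigma_I$. When $A$ is moreover diagonalizable, I would evaluate the integral by expanding $e^{sA} = \sum_j e^{\lambda_j s} v_j u_j'$: the projections $P_I$ annihilate the $j=1$ and $k=1$ terms, and integrating $e^{(\lambda_j+\lambda_k-\lambda_1)s}$ over $[0,\infty)$ yields the factor $1/(\lambda_1-\lambda_j-\lambda_k)$, giving the closed form (\ref{eq:sigmadiag}). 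The main obstacle I anticipate is the transfer through the random time $\tau_n$: its fluctuations live in the Perron direction and are correlated with $Y$, so I must verify that applying $P_I$ genuinely decouples the non-Perron fluctuations from the time-change randomness (automatic in part (ii), but requiring care in general), and separately control the error between $Y(\tau_n)$ and its continuous-time Gaussian approximation uniformly enough to preserve weak convergence.
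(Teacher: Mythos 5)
This theorem is not proved in the paper at all: it is imported verbatim from Janson's 2004 paper (Theorem 3.22 and Lemmas 5.4 and 5.3(i) of \cite{JANS:04}), so the only meaningful comparison is with Janson's own argument, and your plan --- the Athreya--Karlin exponential-clock embedding, the martingales $e^{-\lambda_j t}u_j'Y(t)$ along eigendirections, a martingale functional CLT producing $\Sigma_I$, and de-Poissonization through the jump times $\tau_n$ --- is essentially that argument, so you are following the same route as the cited source. The one slip is the claim in part (ii) that the limit $W$ ``collapses to a constant'': $W$ remains genuinely random under the balance condition (for a Yule process it is exponentially distributed); what $a\cdot\mathbb{E}(\xi_i)=c$ actually buys is that the discrete-time total activity $a'X_n=a'X_0+cn$ is deterministic, so the random time change $\tau_n$ exactly cancels the $W$-randomness upon evaluating $Y(\tau_n)$ --- which is the mechanism your preceding clause already identifies, so the conclusion $\Sigma=c\Sigma_I$ is unaffected.
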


%\begin{remark}\label{rem:initiate}
%So long as (A5) is satisfied, the initial vector $X_0$ does not affect the results of Theorem \ref{thm:urns}. 
%\end{remark}

\begin{remark}\label{rem:naught}
So long as (A5) is satisfied, the initial configuration $X_0$ of the urn does not have any effects on the limiting distribution. 
\end{remark}

%\begin{remark}\label{rem:exp} 
%By convergence in probability and dominated convergence (see for example \cite[Remark 4.3]{HOJS:17}),
%we get that 
%\[ \mathbb{E}X_{n}/n \rightarrow \mu.\]
%\end{remark}

\begin{remark}\label{rem:asurn}
From (\ref{eq:conv}), we get a a weak law of large numbers 
\begin{equation}\label{eq:LLNurn}
\mathbb{E}X_n/n \xrightarrow{p} \mu.
\end{equation}
In fact, the convergence in (\ref{eq:LLNurn}) holds almost surely for urns satisfying (A1)--(A7) (see \cite[Theorem 3.21]{JANS:04}). Therefore, once the convergences in (\ref{eq:hookconv}) and (\ref{eq:bipolarconv}) are established via P\'olya urns in the following section, the convergences in (\ref{eq:LLNhook}) and (\ref{eq:LLNbip}) hold almost surely.
\end{remark}

\begin{remark}\label{rem:moms}  A recent result by Janson and Pouyanne \cite{JAPO:18} guarantees tha the convergence (\ref{eq:conv}) holds in all moments for certain balanced generalized P\'olya urns; an urn is balanced if the change in total activity at every step is constant. Some of our urns satisfy the conditions of \cite[Theorem 1.1]{JAPO:18} which implies in particular that $n^{-1/2}(\mathbb{E}X_n - n\mu) \rightarrow 0$, and so $n\mu$ in (\ref{eq:conv}) can be replaced by $\mathbb{E}X_n$.
\end{remark}

%----------------------------------%
%---------- PROOFS ----------%
%----------------------------------%
\section{Proofs}\label{sec:proofs}

We start by setting up P\'olya urns so that balls in the urn correspond to vertices in the growth of our network. Next, we prove important properties of the intensity matrices associated with these P\'olya urns. Finally, the pieces are placed together to prove our main results.

%---------------------------------------------------%
%---------- VERTICES AS BALLS ----------%
%---------------------------------------------------%
\subsection{Vertices as balls}\label{sec:vertsasballs}

In this section, we outline how we use the evolution of generalized P\'olya urns to describe the evolutions of the degree distributions in the networks that we study. Throughout the section the notation {\em (out)degree} is used so that the discussion applies to both types of networks simultaneously. Recall that Theorem \ref{thm:hooking} and Corollary \ref{thm:hookingspecial} apply to degrees of hooking netwoks, while Theorem \ref{thm:bipolar} and Corollary \ref{thm:bipolarspecial} apply to outdegrees of bipolar networks. 

We start by first looking at an urn with infinitely many types. We assign a type to each (out)degree in the network so that a ball of type $k$ represents a vertex of (out)degree $k$. We initiate each network by choosing a block from the list of blocks. This corresponds to starting a P\'olya urn with a ball of the matching type for the (out)degree of each vertex in the block. In the evolution of the network, when a block is attached, this corresponds to choosing a ball in the urn of type corresponding to the (out)degree of the latch $v$ and replacing it with a ball representing the new (out)degree of $v$ along with balls representing the (out)degrees of the rest of the vertices of the newly attached block. Since a latch of (out)degree $k$ is chosen at random proportionally to $w_k = \chi k + \rho$, then all balls of type $k$ have activity $w_k$ in the P\'olya urn so that a ball of type $k$ is chosen at random proportionally to its activity $w_k$. 

The P\'olya urn described above has infinitely many types, and so Theorem \ref{thm:urns} does not apply. Therefore, we would like to instead use an urn with finitely many types in the same manner as is done in \cite{JANS:05} and \cite{HOJS:17}. The urn is replaced with the following P\'olya urn: let $d$ be a positive integer corresponding to the largest (out)degree we wish to study in this instance of the model. A new ball of special type $\ast$ with activity $a_{\ast} = 1$ is introduced, and for every $k > d$, each ball of type $k$ is replaced with $w_k$ balls of special type $\ast$. In this way, the probability of choosing a ball of special type in the new urn is equal to the probability of choosing a ball of type greater than $d$ in the old urn. If a latch $v$ with (out)degree $k \leq d$ is chosen, and a block is attached so that $v$ now has (out)degree $k + j > d$, then the ball of type $k$ is removed and $w_{k+j}$ balls of special type are added. If instead $v$ has (out)degree $k > d$ and a block is attached so that the (out)degree of the vertex is now $k+j$, then the ball of special type that was chosen is placed back in the urn, along with $\chi j$ balls of special type. 

The final change we will make to our urn is to represent the master hook of the hooking network or the master source of the bipolar network, say with (out)degree $k$, with $w_k$ balls of special type in our urn. This guarantees that all types of balls in the urn that are not special types correspond to (out)degrees that are essential; recall from Definition \ref{def:admissible} that a positive integer $k$ is an essential (out)degree if there is a positive probability that at some point in the growth of the network at least two vertices have (out)degree $k$, and recall from Proposition \ref{pro:adm} that only the master hook of the hooking network or the master source of the bipolar network may have a nonessential degree. For a positive integer $d$, the possible types of balls present in the urn are exactly the essential (out)degrees less than or equal to $d$, together with a ball of special type $\ast$. In our intensity matrix, we can then omit the rows and columns corresponding to types that are never present in the urn. By restricting to essential (out)degrees, it can be verified that now every ball in the urn is of dominating type. No matter the initial network (or initial configuration of the urn), there is a positive probability that a ball representing a vertex with the essential (out)degree $k$ will be present in the urn. Therefore the urn (and its intensity matrix) is irreducible. As discussed in Section \ref{sec:polya}, it is easy to verify that the assumptions (A1)--(A7) are satisfied for irreducible urns. To avoid confusion, we label the type of a ball with the (out)degree of the vertex it represents. 

We illustrate how to calculate the intensity matrices for the urns associated with our running examples of hooking networks and bipolar networks given in Section \ref{sec:networks}. 

\subsubsection{A P\'olya urn for our running example of a hooking network}

Consider the blocks in Figure \ref{fig:hookseeds}, and a sequence of hooking networks grown from these blocks. Let's look at the instance of the model where the choice of a latch is made proportionally to its degree (i.e., when $\chi =1$, $\rho = 0$ and so $w_k = k$). Suppose we look at vertices with degrees less than or equal to 5. As discussed after the definition of essential degrees (Definition \ref{def:admissible}), the essential degrees for these hooking networks are the odd numbers; and so 1, 3, 5 are the essential degrees less than or equal to 5. The images in Figure \ref{fig:hookingnetworkreplace} illustrate the possibilities for replacing a ball of type $k$, corresponding to attaching a block to a latch with degree $k$. The probabilities in the figure are the probabilities $p_i$ assigned to the blocks in Figure \ref{fig:hookseeds}.  

%----------REPLACEMENT HOOKING NETWORK FIGURE ----------%
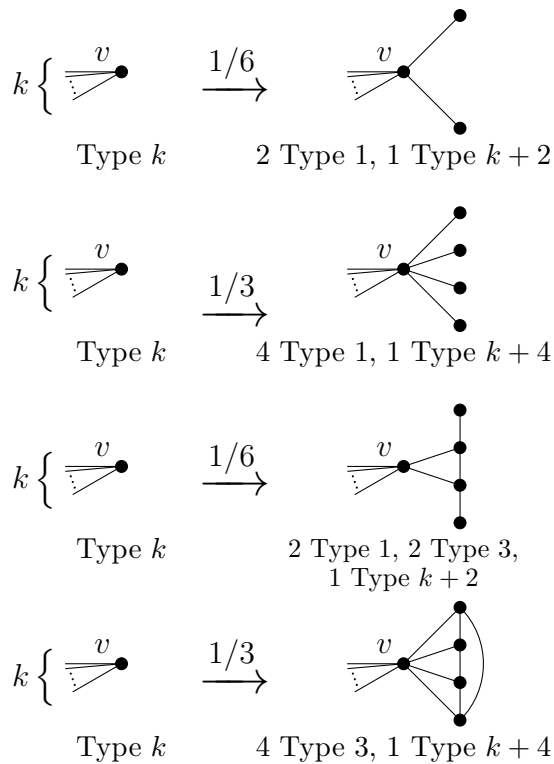
\begin{figure}[h!]
\centering
\begin{tikzpicture}[scale = 0.75]

\node at (3.75,12.75) {$k \begin{cases}  \\ \end{cases}$};
\filldraw(5,13) circle (3pt)  node(v){} node[above left]{$v$};
\node (v2) at (4,13){};
\node (v1) at (4.0038,12.91285){};
\node (v3) at (4.134, 12.5){};
\draw (v.center) -- (v1.center);
\draw (v.center) -- (v2.center);
\draw (v.center) -- (v3.center);
\filldraw (4.1, 12.825) circle (0.25pt);
\filldraw (4.135, 12.725) circle (0.25pt);
\filldraw (4.17, 12.625) circle (0.25pt);

\node at (5, 11.5){\small Type $k$};

\node at (7,13){\Large$\xrightarrow{1/6}$};

\filldraw(10,13) circle (3pt)  node(v){} node[above left]{$v$};
\node (v2) at (9,13){};
\node (v1) at (9.0038,12.91285){};
\node (v3) at (9.134, 12.5){};
\draw (v.center) -- (v1.center);
\draw (v.center) -- (v2.center);
\draw (v.center) -- (v3.center);
\filldraw (9.1, 12.825) circle (0.25pt);
\filldraw (9.135, 12.725) circle (0.25pt);
\filldraw (9.17, 12.625) circle (0.25pt);

\filldraw(11, 14) circle (3pt) node(r){};
\filldraw(11, 12) circle (3pt) node(l){};
\draw (r.center) -- (v.center) -- (l.center);
\node at (10, 11.5){\small 2 Type 1, 1 Type $k+2$ };

\node at (3.75,9.25) {$k \begin{cases}  \\ \end{cases}$};
\filldraw(5,9.5) circle (3pt)  node(v){} node[above left]{$v$};
\node (v2) at (4,9.5){};
\node (v1) at (4.0038,9.41285){};
\node (v3) at (4.134, 9){};
\draw (v.center) -- (v1.center);
\draw (v.center) -- (v2.center);
\draw (v.center) -- (v3.center);
\filldraw (4.1, 9.325) circle (0.25pt);
\filldraw (4.135, 9.225) circle (0.25pt);
\filldraw (4.17, 9.125) circle (0.25pt);

\node at (5, 8){\small Type $k$};

\node at (7,9){\Large$\xrightarrow{1/3}$};

\filldraw(10,9.5) circle (3pt)  node(v){} node[above left]{$v$};
\node (v2) at (9,9.5){};
\node (v1) at (9.0038,9.41285){};
\node (v3) at (9.134, 9){};
\draw (v.center) -- (v1.center);
\draw (v.center) -- (v2.center);
\draw (v.center) -- (v3.center);
\filldraw (9.1, 9.325) circle (0.25pt);
\filldraw (9.135, 9.225) circle (0.25pt);
\filldraw (9.17, 9.125) circle (0.25pt);

\filldraw(11, 10.5) circle (3pt) node(r){};
\filldraw(11,9.8333) circle (3pt) node(br){};
\filldraw(11, 9.1666) circle (3pt) node(bl){};
\filldraw(11,8.5) circle (3pt) node(l){};
\draw (r.center) -- (v.center) -- (l.center);
\draw (br.center) -- (v.center) -- (bl.center);
\node at (10, 8){\small 4 Type 1, 1 Type $k+4$ };

\node at (3.75,5.75) {$k \begin{cases}  \\ \end{cases}$};
\filldraw(5,6) circle (3pt)  node(v){} node[above left]{$v$};
\node (v2) at (4,6){};
\node (v1) at (4.0038,5.91285){};
\node (v3) at (4.134, 5.5){};
\draw (v.center) -- (v1.center);
\draw (v.center) -- (v2.center);
\draw (v.center) -- (v3.center);
\filldraw (4.1, 5.825) circle (0.25pt);
\filldraw (4.135, 5.725) circle (0.25pt);
\filldraw (4.17, 5.625) circle (0.25pt);

\node at (5, 4.5){\small Type $k$};

\node at (7,6){\Large$\xrightarrow{1/6}$};

\filldraw(10,6) circle (3pt)  node(v){} node[above left]{$v$};
\node (v2) at (9,6){};
\node (v1) at (9.0038,5.91285){};
\node (v3) at (9.134, 5.5){};
\draw (v.center) -- (v1.center);
\draw (v.center) -- (v2.center);
\draw (v.center) -- (v3.center);
\filldraw (9.1, 5.825) circle (0.25pt);
\filldraw (9.135, 5.725) circle (0.25pt);
\filldraw (9.17, 5.625) circle (0.25pt);

\filldraw(11, 7) circle (3pt) node(r){};
\filldraw(11,6.3333) circle (3pt) node(br){};
\filldraw(11,5.6666) circle (3pt) node(bl){};
\filldraw(11, 5) circle (3pt) node(l){};
\draw (r.center) -- (l.center);
\draw (br.center) -- (v.center) -- (bl.center);
\node at (10, 4.5){\footnotesize 2 Type 1, 2 Type 3,};
\node at(10,4){\footnotesize  1 Type $k+2$ };

\node at (3.75,2.25) {$k \begin{cases}  \\ \end{cases}$};
\filldraw(5,2.5) circle (3pt)  node(v){} node[above left]{$v$};
\node (v2) at (4,2.5){};
\node (v1) at (4.0038,2.41285){};
\node (v3) at (4.134, 2){};
\draw (v.center) -- (v1.center);
\draw (v.center) -- (v2.center);
\draw (v.center) -- (v3.center);
\filldraw (4.1, 2.325) circle (0.25pt);
\filldraw (4.135, 2.225) circle (0.25pt);
\filldraw (4.17, 2.125) circle (0.25pt);

\node at (5, 1){\small Type $k$};

\node at (7,2.5){\Large$\xrightarrow{1/3}$};

\filldraw(10,2.5) circle (3pt)  node(v){} node[above left]{$v$};
\node (v2) at (9,2.5){};
\node (v1) at (9.0038,2.41285){};
\node (v3) at (9.134, 2){};
\draw (v.center) -- (v1.center);
\draw (v.center) -- (v2.center);
\draw (v.center) -- (v3.center);
\filldraw (9.1, 2.325) circle (0.25pt);
\filldraw (9.135, 2.225) circle (0.25pt);
\filldraw (9.17, 2.125) circle (0.25pt);

\filldraw(11, 3.5) circle (3pt) node(r){};
\filldraw(11, 2.8333) circle (3pt) node(br){};
\filldraw(11, 2.1666) circle (3pt) node(bl){};
\filldraw(11, 1.5) circle (3pt) node(l){};
\draw (br.center) -- (bl.center) -- (v.center) -- (br.center) -- (r.center) -- (v.center) -- (l.center) -- (bl.center);
\draw (l.center) arc (-45:45:1.4142);

\node at (10, 1){\small 4 Type 3, 1 Type $k+4$ };

\end{tikzpicture}
\caption{The replacements of a ball of type $k$ in a hooking network grown from the blocks in Figure \ref{fig:hookseeds} }
\label{fig:hookingnetworkreplace}
\end{figure}

The intensity matrix for this urn has 4 rows and columns: one of each for balls of type 1, 3, 5, and the last row and column for balls of special type $\ast$. Let's consider what happens when a block is attached to a latch with degree 1; this corresponds to choosing a ball of type 1. The probability that the block $G_1$ is attached is $1/6$. The hook of $G_1$ has degree 2 and the two other vertices  have degree 1. The ball of type 1 is removed and replaced with a ball of type 3 (the new degree of the latch $v$) along with two new balls of type 1. Performing similar calculations for the other blocks with the help of Figure \ref{fig:hookingnetworkreplace}, we get that 
\[ \mathbb{E}\xi_1 = \frac{1}{6}\left(\begin{array}{c} 1 \\ 1 \\ 0 \\ 0 \end{array}\right) 
+\frac{1}{3}\left(\begin{array}{c} 3 \\ 0 \\ 1 \\ 0 \end{array}\right)
+\frac{1}{6}\left(\begin{array}{c} 1 \\ 3 \\ 0 \\ 0  \end{array}\right)
+\frac{1}{3}\left(\begin{array}{c} -1 \\ 4 \\ 1 \\ 0  \end{array}\right)
= \frac{1}{6}\left(\begin{array}{c} 6 \\ 12 \\ 4 \\ 0 \end{array}\right). \]
Recall that the rows and columns for nonessential degrees are removed, and so the first row represents balls of type 1, the second row for balls of type 3, the third for balls of type 5, and the final row for balls of special type $\ast$. 

Now consider what happens when a ball of type 3 is chosen, i.e., if a vertex $v$ with degree 3 is chosen as a latch. If a hook with degree 4 is attached to $v$, the degree of $v$ is increased to 7. Recall that we instead place $w_7 = 7$ balls of special type when this happens. Performing similar calculations as above with the help of Figure \ref{fig:hookingnetworkreplace} yields
\[ \mathbb{E}\xi_3 =  \frac{1}{6}\left(\begin{array}{c} 2 \\ -1 \\ 1 \\ 0 \end{array}\right) 
+\frac{1}{3}\left(\begin{array}{c} 4 \\ -1 \\ 0 \\ 7 \end{array}\right)
+\frac{1}{6}\left(\begin{array}{c} 2 \\ 1 \\ 1 \\ 0 \end{array}\right)
+\frac{1}{3}\left(\begin{array}{c} 0 \\ 3 \\ 0 \\ 7  \end{array}\right)
= \frac{1}{6}\left(\begin{array}{c} 12 \\ 4 \\ 2 \\ 28 \end{array}\right). \]
Performing similar calculations when a ball of type 5 is chosen gives
\[ \mathbb{E}\xi_5 = \frac{1}{6}\left(\begin{array}{c} 2 \\ 0 \\ -1 \\ 7 \end{array}\right) 
+\frac{1}{3}\left(\begin{array}{c} 4 \\ 0 \\ -1 \\ 9 \end{array}\right)
+\frac{1}{6}\left(\begin{array}{c} 2 \\ 2 \\ -1 \\ 7 \end{array}\right)
+\frac{1}{3}\left(\begin{array}{c} 0 \\ 4 \\ -1 \\ 9  \end{array}\right)
= \frac{1}{6}\left(\begin{array}{c} 12 \\ 10 \\ -6 \\ 50 \end{array}\right). \]
Finally let's consider attaching a block to a vertex of degree greater than 5, or to the master hook of the network. In either case, this corresponds to choosing a ball of special type. If the hook of the block $G_i$ attached has degree two, then the ball of special type is replaced along with another $2\chi = 2$ balls of special type, while $4\chi = 4$ balls of special type are added if the hook has degree 4. Therefore, we calculate for the special type $\ast$
\[\mathbb{E}\xi_{\ast} =  \frac{1}{6}\left(\begin{array}{c} 2 \\ 0 \\ 0 \\  2 \end{array}\right) 
+\frac{1}{3}\left(\begin{array}{c} 4 \\ 0 \\ 0 \\  4 \end{array}\right)
+\frac{1}{6}\left(\begin{array}{c} 2 \\ 2 \\ 0 \\ 2 \end{array}\right)
+\frac{1}{3}\left(\begin{array}{c} 0 \\ 4 \\ 0 \\  4 \end{array}\right)
= \frac{1}{6}\left(\begin{array}{c} 12 \\ 10 \\ 0 \\  20 \end{array}\right). \]
The activities for the types are $w_1 = 1, w_3 = 3$ and $w_5 = 5$ for types $1,3,5$ respectively, while the special type $\ast$ has activity 1 (as discussed earlier). The intensity matrix $A$ consists of $\mathbb{E}\xi_1, 3\mathbb{E}\xi_3, 5\mathbb{E}\xi_5$ for the first 3 columns, and $\mathbb{E}\xi_{\ast}$ for the last column, thus we get  
\[ A = \frac{1}{6}\left(\begin{array}{cccc}
6 & 36 & 60  & 12 \\
12 & 12 & 50  & 10 \\
4 & 6 & -30 &  0 \\
0 & 84 & 250 & 20 \\
\end{array} \right).\]
One can verify that the eigenvalues of $A$ are $\lambda_1 = 31/3$ and $-1,-3, -5$ and we see that $\lambda_1$ is what was calculated in (\ref{eq:lambdahookexample}). By Theorem \ref{thm:urns}, we have a multivariate normal limit law. One can also verify that the right eigenvector $v_1$ of $A$ associated with $\lambda_1$ satisfying $a \cdot v_1 = 1$, where $a = (1,3, 5,1)$ is the vector of activities, is 
\[ v_1 = \left( \frac{6}{34}, \frac{11}{85}, \frac{63}{3910}, \frac{1387}{3910}\right)'.\]
Restricted to the first 3 entries, the vector $v_1$ is exactly the vector $\nu$ calculated in (\ref{eq:vhookexample}), and so by Theorem \ref{thm:urns}, Theorem \ref{thm:hooking} is true in this particular case. 

\subsubsection{A P\'olya urn for our running example of a bipolar network}

Now consider the blocks of Figure \ref{fig:bipolarseeds} and a sequence of bipolar networks grown from these blocks. Let's look at the instance of the model where the choice of the latch is made uniformly at random (i.e., when $\chi = 0$, $\rho = 1$, and so $w_k =1$). All positive integers are essential outdegrees. The images of Figure \ref{fig:bipolarnetworkreplace} illustrate the possibilities of replacing a ball of type $k$, corresponding to choosing a latch $v$ with outdegree $k$ and one of the arcs leading out of $v$ uniformly at random. The probabilities in the figure are the probabilities $p_i$ assigned to the blocks in Figure \ref{fig:bipolarseeds}. 

%---------- BIPOLAR NETWORKS REPLACEMENT FIGURE ---------%
\begin{figure}[h!]
\centering
\begin{tikzpicture}[scale = 0.75]

\node at (3.6,12.825) {$k-1 \begin{cases}  \\ \\  \end{cases}$};
\filldraw(5,13) circle (3pt)  node(v){} node[above left]{$v$};
\node (v2) at (4.02,13.19715){};
\node (v1) at (4.0038,12.91285){};
\node (v3) at (4.134, 12.5){};
\filldraw(6.5,13) circle (3pt)  node(u){} node[above right]{$u$};
\node (uv) at (6, 13){};
\draw[->] (v.center) -- (v1.center);
\draw[->] (v.center) -- (v2.center);
\draw[->] (v.center) -- (v3.center);
\draw[->] (v.center) -- (uv);
\draw (v.center) -- (u.center);
\filldraw (4.1, 12.825) circle (0.25pt);
\filldraw (4.135, 12.725) circle (0.25pt);
\filldraw (4.17, 12.625) circle (0.25pt);

\node at (5, 11.5){\small Type $k$};

\node at (7.75,13){\Large$\xrightarrow{1/2}$};

\filldraw(10,13) circle (3pt)  node(v){}node[above left]{$v$};
\node (v2) at (9.02,13.19715){};
\node (v1) at (9.0038,12.91285){};
\node (v3) at (9.134, 12.5){};
\draw[->] (v.center) -- (v1.center);
\draw[->] (v.center) -- (v2.center);
\draw[->] (v.center) -- (v3.center);
\filldraw (9.1, 12.825) circle (0.25pt);
\filldraw (9.135, 12.725) circle (0.25pt);
\filldraw (9.17, 12.625) circle (0.25pt);

\filldraw(10,13) circle (3pt) node(n1){};
\node(nb1) at (12.5, 12.5){};
\filldraw(13, 12) circle (3pt) node (b1){};
\node(nt1) at (12.5, 13.5){};
\filldraw(13, 14) circle (3pt) node(t1){};
\node(bm1) at (13.5, 12.5){};
\node(nm1) at (11,13){};
\filldraw(12,13) circle (3pt) node (m1){};
\node(tm1) at (13.5, 13.5){};
\filldraw(14,13) circle (3pt) node (s1){} node[above right]{$u$};
\node(ms1) at (13,13){};
\draw[->] (m1.center) -- (nb1.center);
\draw[->] (m1.center) -- (nt1.center);
\draw[->] (n1.center) -- (nm1.center);
\draw[->] (b1.center) -- (bm1.center);
\draw[->] (t1.center) -- (tm1.center);
\draw[->] (m1.center) -- (ms1.center);
\draw (n1.center) -- (m1.center) -- (s1.center);
\draw (m1.center) -- (t1.center) -- (s1.center);
\draw (m1.center) -- (b1.center) -- (s1.center);

\node at (12, 11.5){\small 2 Type 1, 1 Type 3, 1 Type $k$};

\node at (3.6,9.325) {$k-1 \begin{cases}  \\ \\  \end{cases}$};
\filldraw(5,9.5) circle (3pt)  node(v){} node[above left]{$v$};
\node (v2) at (4.02,9.69715){};
\node (v1) at (4.0038,9.41285){};
\node (v3) at (4.134, 9){};
\filldraw(6.5,9.5) circle (3pt)  node(u){} node[above right]{$u$};
\node (uv) at (6, 9.5){};
\draw[->] (v.center) -- (v1.center);
\draw[->] (v.center) -- (v2.center);
\draw[->] (v.center) -- (v3.center);
\draw[->] (v.center) -- (uv);
\draw (v.center) -- (u.center);
\filldraw (4.1, 9.325) circle (0.25pt);
\filldraw (4.135, 9.225) circle (0.25pt);
\filldraw (4.17, 9.125) circle (0.25pt);

\node at (5, 8){\small Type $k$};

\node at (7.75,9.5){\Large$\xrightarrow{1/2}$};

\filldraw(10,9.5) circle (3pt)  node(v){}node[above left]{$v$};
\node (v2) at (9.02,9.69715){};
\node (v1) at (9.0038,9.41285){};
\node (v3) at (9.134, 9){};
\draw[->] (v.center) -- (v1.center);
\draw[->] (v.center) -- (v2.center);
\draw[->] (v.center) -- (v3.center);
\filldraw (9.1, 9.325) circle (0.25pt);
\filldraw (9.135, 9.225) circle (0.25pt);
\filldraw (9.17, 9.125) circle (0.25pt);

\filldraw(10,9.5) circle (3pt) node (n2){};
\node(nt2) at (11,10){};
\filldraw(12,10.5) circle (3pt) node (t2){};
\node(nb2) at (11,9){};
\filldraw(12,8.5) circle (3pt) node (b2){};
\node(ts2) at (13,10){};
\node(bs2) at (13,9){};
\filldraw(14,9.5) circle (3pt) node (s2){} node[above right]{$u$};
\draw[->] (n2.center) -- (nt2.center);
\draw[->] (n2.center) -- (nb2.center);
\draw[->] (t2.center) -- (ts2.center);
\draw[->] (b2.center) -- (bs2.center);
\draw[->] (t2.center) arc (135:182.5:1.4142);
\draw[->] (b2.center) arc (-45:2.5:1.4142);
\draw (n2.center) -- (t2.center) -- (s2.center);
\draw (n2.center) -- (b2.center) -- (s2.center);
\draw (t2.center) arc (135: 225: 1.4142);
\draw (b2.center) arc (-45: 45:1.4142);

\node at (12, 8){\small 2 Type 2, 1 Type $k+1$};
\end{tikzpicture}
\caption{The replacement of a ball of type $k$ in a bipolar network grown from the blocks in Figure \ref{fig:bipolarseeds}}
\label{fig:bipolarnetworkreplace}
\end{figure}
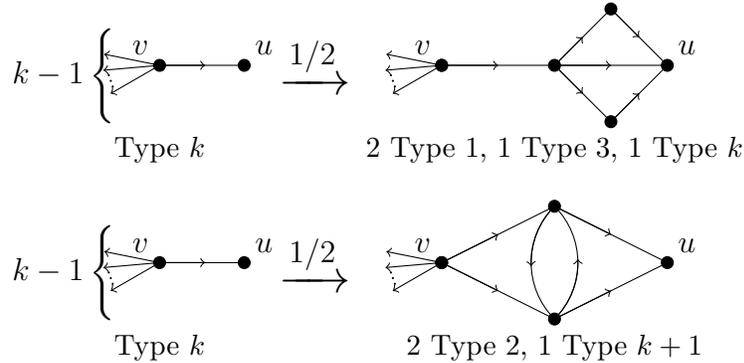

Suppose we look at vertices with outdegrees less than or equal to 3. We can calculate the intensity matrix in the same way as the intensity matrix for the hooking network example above. The main difference in this case is that there is a positive probability that the outdegree of a latch $v$ is not changed. For example, if a ball of type $2$ is chosen; that is, if a latch $v$ with outdegree 2 is chosen, then with probability $1/2$, the degree of $v$ is not changed after the block $B_1$ is attached. In this case, the ball of type 2 is replaced in the urn, along with 2 balls of type $1$ and one ball of type $3$. We can calculate 
\[ \mathbb{E}\xi_{2} = \frac{1}{2}\left(\begin{array}{c} 2 \\ 0 \\ 1 \\ 0 \end{array}\right) 
+ \frac{1}{2}\left(\begin{array}{c} 0\\ 1 \\ 1 \\ 0 \end{array}\right) 
= \frac{1}{2} \left(\begin{array}{c} 2 \\ 1 \\ 2\\ 0 \end{array}\right).\]
For the urn in this case, a vertex with outdegree greater than 3 is represented by a single ball of special type $\ast$. The intensity matrix is 
\begin{equation}\label{eq:Abip}
 A = \frac{1}{2} \left( \begin{array}{cccc} 
1 & 2 & 2 & 2 \\
3 & 1 & 2 & 2 \\
1 & 2 & 0 & 1 \\
0 & 0 & 1 & 0 \end{array}\right).
\end{equation}
The eigenvalues for $A$ are $\lambda_1 = 5/2$ and $-1/2$, and we see that $\lambda_1$ is precisely what was calculated in (\ref{eq:lambdabipolarexample}). The right eigenvector $v_1$ of $A$ associated with $\lambda_1$ whose entries sum to 1 is 
\[ v_1 =  \left(\frac{1}{3}, \frac{7}{18}, \frac{25}{108}, \frac{5}{108}\right)'. \]
Restricted to the first $3$ entries, the vector $v_1$ is exactly the vector $\psi$ calculated in (\ref{eq:vbipolarexample}). The multivariate normal limit law claimed by Theorem \ref{thm:bipolar} holds by Theorem \ref{thm:urns} in this case.

%-------------------------------------------------------------------------------------%
%---------- PROPERTIES OF THE INTENSITY MATRICES ----------%
%-------------------------------------------------------------------------------------%

\subsection{Properties of the intensity matrices}\label{sec:A}
Recall that $w_k = \chi k + \rho$.
% Let $k_1 < \cdots < k_r$ be positive integers. Let $f$ and $g$ be nonnegative functions. 
Let $A = (a_{ij})_{i,j = 1}^{r+1}$ be the $(r+1) \times (r+1)$ matrix with entries 
\begin{equation}\label{eq:A}
a_{ij} = \begin{cases}
w_{k_j}f(k_i) & i < j \leq r  \\
f(k_i) & i < j = r+1 \\
w_{k_i}(f(k_i) + g(0) - 1) & i=j \leq r \\
w_{k_j}(f(k_i) + g(k_i - k_j)) & j < i \leq r \\
w_{k_j}\sum_{k>k_r}w_k(f(k) + g(k-k_j))  & j < i = r+1 \\
\sum_{k>k_r}w_k f(k)  + \sum_{k \geq 1}\chi k g(k) & j=i = r+1.
\end{cases}
\end{equation}
%(the matrix $A$ is also presented in Appendix \ref{app:A}). 
%In the next section, we will prove that the intensity matrices of our urns are indeed of the same form as $A$,
where $f(k)$ was introduced in (\ref{eq:fhook}) and (\ref{eq:fbip}), $g(k)$ was introduced in (\ref{eq:ghook}) and (\ref{eq:gbip}), and $k_1, \ldots, k_r$ are essential degrees. We prove properties of $A$ that are useful to the proofs of our main results. From Theorem \ref{thm:urns}, we see that to prove our main result, we need to prove properties of the eigenvalues and eigenvectors of $A$. 
The eigenvalues and eigenvectors of $A$ depend on properties of the values $f(k)$ and $g(k)$. These properties are gathered in the following proposition. 
%We need three assumptions on $f(k)$ and $g(k)$ to prove our lemmas. In the next section we show that these assumptions are satisfied for the values of $f(k)$ and $g(k)$ defined for our models:  
\begin{proposition}\label{lem:ass}
For $f(k)$ defined in (\ref{eq:fhook}) and (\ref{eq:fbip}), and $g(k)$ defined in (\ref{eq:ghook}) and (\ref{eq:gbip}), the following properties hold:
% (F), (G1), and (G2) hold.
\begin{itemize}
\item[(F)  ] If $k \leq k_r$ and $k \neq k_{i}$ for all $i=1, \ldots, r$, then $f(k) = 0$. 
\item[(G1)] $\sum_{k \geq 0} g(k) = 1$. 
\item[(G2)] If $k \leq k_r$ and $k \neq k_{i}$ for all $i=1, \ldots, r$, then $g(k-k_j) = 0$ for all $j = 1, \ldots, r$.   
\end{itemize}
\end{proposition}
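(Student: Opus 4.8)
The plan is to exploit the fact that (F) and (G2) share the hypothesis $k \leq k_r$ with $k \neq k_i$ for all $i=1,\ldots,r$. Since $k_1 < \cdots < k_r$ are by definition the \emph{first} $r$ essential (out)degrees, every essential (out)degree that is $\leq k_r$ must coincide with one of $k_1,\ldots,k_r$; hence the hypothesis is equivalent to saying that $k$ is a \emph{nonessential} (out)degree. I would therefore prove (F) and (G2) by contraposition: in each case a nonzero value of the quantity in question produces, with positive probability, a non-master vertex (one that is neither the master hook nor the master source) of (out)degree $k$, and Proposition \ref{pro:adm} then forces $k$ to be essential, contradicting the hypothesis. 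Throughout I use the paper's convention that ``(out)degree'' and ``master vertex'' are read appropriately for each of the two models, so that a single argument covers both.

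For (F), suppose $f(k) > 0$. By the definition (\ref{eq:fhook})/(\ref{eq:fbip}) there is a block (with positive probability $p_i$) containing a non-hook vertex (non-pole vertex, in the bipolar case) $v_0$ with $\deg(v_0) = k$ (resp. $\deg^+(v_0) = k$) inside the block. With positive probability this block is attached at some step; the copy of $v_0$ is then a \emph{fresh} vertex of the network, never the master vertex, and its (out)degree equals its within-block (out)degree $k$ at the moment of attachment, since only the hook/north pole is fused to the latch. So with positive probability a non-master vertex has (out)degree $k$, whence $k$ is essential by Proposition \ref{pro:adm}. Contraposing, a nonessential $k$ has $f(k) = 0$, which is exactly (F).

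For (G2), first dispose of the trivial range $k - k_j < 0$, where $g(k-k_j) = 0$ directly from the definition (no hook has negative degree, and no north pole has outdegree $\leq 0$); since $k \neq k_j$, the remaining case is $k - k_j \geq 1$. The point to keep straight is the degree bookkeeping motivating the shift in (\ref{eq:gbip}): attaching $G_i$ to a latch raises its degree by $\deg(h_i)$, while attaching $B_i$ at an out-arc of a latch raises its outdegree by $\deg^+(N_i) - 1$ (one arc lost, $\deg^+(N_i)$ gained). In both models $g(m) > 0$ means precisely that some block's attachment increases a chosen latch's (out)degree by exactly $m$. Thus if $g(k - k_j) > 0$ there is a block raising a latch's (out)degree from $k_j$ to $k$. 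Because $k_j$ is essential, with positive probability two vertices of (out)degree $k_j$ coexist at some time, and since there is only one master vertex, at least one such vertex $v$ is a non-master vertex. Choosing $v$ as the latch (positive probability, as weights $w_{k_j} > 0$, and in the bipolar case an out-arc of $v$ is available since $k_j \geq 1$) and attaching the block above yields, with positive probability, a non-master vertex of (out)degree $k$; by Proposition \ref{pro:adm}, $k$ is essential, a contradiction. Hence $g(k-k_j) = 0$, giving (G2).

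Property (G1) is then a direct computation: each block $G_i$ (resp. $B_i$) has a single hook $h_i$ (resp. north pole $N_i$) of one fixed (out)degree, so summing $g$ over all its admissible arguments counts each block exactly once with weight $p_i$, giving
\[
\sum_{k \geq 0} g(k) = \sum_{G_i \in \mathcal{C}} p_i = 1,
\]
by $p_1 + \cdots + p_m = 1$ (in the hooking case $g(0) = 0$, since every block is connected with at least two vertices, so the sum over $k \geq 0$ is the same as over $k \geq 1$). The main obstacle I anticipate is not any single deep step but keeping the two models in lockstep: I would state the ``non-master vertex of (out)degree $k$ $\Rightarrow$ $k$ essential'' reduction once through Proposition \ref{pro:adm}, and then verify the degree-change accounting separately for hooking and bipolar networks so that the shift by $1$ in (\ref{eq:gbip}) is correctly matched to the loss of the replaced arc.
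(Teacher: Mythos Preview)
Your proposal is correct and follows essentially the same route as the paper: contraposition via Proposition~\ref{pro:adm} for (F) and (G2), and the direct computation $\sum_k g(k)=\sum_i p_i=1$ for (G1). Your write-up is in fact slightly more careful than the paper's, in that you explicitly dispose of the $k-k_j<0$ case and spell out the ``minus one'' bookkeeping behind the shift in (\ref{eq:gbip}).
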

\begin{proof}
In the interest of space, the lemma is proved for both hooking networks and bipolar networks simultaneously. The notation {\em (out)degree} is used, and is interpreted as {\em degree} for hooking networks and {\em outdegree} for bipolar networks. 

If $f(k) \neq 0$, then there is a positive probability that at any step in the growth of the network, a new vertex (that is not the master hook or the master source) appears with (out)degree $k$. By Definition \ref{def:admissible} and by Proposition \ref{pro:adm}, $k$ is an essential (out)degree in this case, and so if $k \leq k_r$, then $k \in \{k_1, \ldots, k_r\}$, proving that (F) holds. The property (G1) holds since $\sum_{k \geq 0} g(k) = p_1 + \cdots + p_m = 1$, where $p_i$ is the probability of the block $G_i$ or $B_i$. As for the property (G2), assume that $g(k - k_j) \neq 0$ for some essential (out)degree $k_j \leq k_r$. Since $k_j$ is an essential (out)degree, there is a positive probability that some vertex $v$ (that is not the master hook or the master source) has (out)degree $k_j$. By definition, there is a probability of $g(k-k_j)$  that the (out)degree of $v$ is increased to $k$ if a hook is fused to $v$. Therefore, there is a positive probability that there is a vertex with (out)degree $k$, and so $k$ is an essential (out)degree, again by Definition \ref{def:admissible} and Proposition \ref{pro:adm}. If $k \leq k_r$, then $k \in \{k_1, \ldots, k_r\}$, and so (G2) holds. 
\end{proof}

Let 
\begin{equation*}
%\label{eq:lambda1}
\lambda_1 =  \sum_{k \geq 1} \left(w_kf(k) + \chi kg(k)\right)
\end{equation*}
be the value defined in (\ref{eq:lambdahook}) and (\ref{eq:lambdabip}). 
We calculate the eigenvalues of $A$ in the following lemma. 

\begin{lemma}\label{lem:eigenvalues}
The matrix $A$ has eigenvalues
\[ \lambda_1, w_{k_1}(g(0) - 1), w_{k_2}(g(0) - 1), \ldots, w_{k_r}(g(0) - 1).\]
\end{lemma}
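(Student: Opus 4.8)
The plan is to exploit a rank-one-plus-triangular decomposition of $A$. First I would rewrite (\ref{eq:A}) as
\[ A = \phi\, a' + L, \]
where $a = (w_{k_1},\dots,w_{k_r},1)'$ is the vector of activities, $\phi = \bigl(f(k_1),\dots,f(k_r),\ \sum_{k>k_r}w_k f(k)\bigr)'$, and $L$ is the lower-triangular matrix with diagonal entries $w_{k_1}(g(0)-1),\dots,w_{k_r}(g(0)-1)$ and $s:=\sum_{k\ge1}\chi k\,g(k)$, strictly-lower entries $L_{ij}=w_{k_j}g(k_i-k_j)$ for $j<i\le r$, and $L_{r+1,j}=w_{k_j}\sum_{k>k_r}w_k g(k-k_j)$ for $j\le r$. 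Matching this against the six cases of (\ref{eq:A}) is a direct term-by-term check: the rank-one part $\phi a'$ supplies the common factor $w_{k_j}f(k_i)$ in column $j\le r$ (and $f(k_i)$ in column $r+1$), while $L$ supplies the remaining $g$-terms and the diagonal correction $w_{k_i}(g(0)-1)$.

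The heart of the argument is to show that the activity vector $a$ is a \emph{left} eigenvector of $A$ for the eigenvalue $\lambda_1$. I would first compute $a'L$. For a column $j\le r$ the entry $(a'L)_j$ equals $w_{k_j}$ times the bracket
\[ w_{k_j}(g(0)-1)+\sum_{i=j+1}^{r}w_{k_i}g(k_i-k_j)+\sum_{k>k_r}w_k g(k-k_j). \]
Using (G2) to drop the vanishing non-essential terms, the last two sums merge into $\sum_{k>k_j}w_k g(k-k_j)$, so the bracket becomes $-w_{k_j}+\sum_{k\ge k_j}w_k g(k-k_j)$; substituting $k=k_j+l$ and $w_{k_j+l}=w_{k_j}+\chi l$, then applying (G1) ($\sum_{l\ge0}g(l)=1$), telescopes this to $-w_{k_j}+w_{k_j}+s=s$. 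Column $r+1$ yields $s$ directly by triangularity, so $a'L=s\,a'$. A short computation using (F) gives $a'\phi=\sum_{k\ge1}w_k f(k)=\lambda_1-s$, whence
\[ a'A=(a'\phi)a'+a'L=(\lambda_1-s)a'+s\,a'=\lambda_1 a'. \]

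With $a'A=\lambda_1 a'$ established, $\lambda_1$ is an eigenvalue and the hyperplane $a^{\perp}=\{v:a'v=0\}$ is $A$-invariant, since $a'(Av)=\lambda_1(a'v)=0$ for $v\in a^\perp$. On $a^\perp$ the rank-one part vanishes, so $A$ and $L$ act identically there. Because $a'L=s\,a'$, that same hyperplane is $L$-invariant with induced map on the one-dimensional quotient equal to multiplication by $s$; since $L$ is triangular with $\det(\lambda I-L)=(\lambda-s)\prod_{i=1}^r(\lambda-w_{k_i}(g(0)-1))$, the characteristic polynomial of $L\!\restriction_{a^\perp}$ is exactly $\prod_{i=1}^r(\lambda-w_{k_i}(g(0)-1))$. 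Combining the action on $a^\perp$ with the action $\lambda_1$ on the complementary quotient gives $\det(\lambda I-A)=(\lambda-\lambda_1)\prod_{i=1}^r(\lambda-w_{k_i}(g(0)-1))$, which is the asserted eigenvalue list.

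The main obstacle is the telescoping identity $a'L=s\,a'$: this is the single step where Proposition \ref{lem:ass} is indispensable, and one must be careful that (F) and (G2) are precisely what justify replacing sums over essential degrees by sums over all degrees, while (G1) produces the cancellation leaving exactly $s$; everything else is bookkeeping and linear algebra. As an alternative one could instead apply the matrix determinant lemma, $\det(\lambda I-L-\phi a')=\det(\lambda I-L)\bigl(1-a'(\lambda I-L)^{-1}\phi\bigr)$, but this forces one to prove the equivalent rational-function identity $a'(\lambda I-L)^{-1}\phi=(\lambda_1-s)/(\lambda-s)$, which is considerably less transparent than the left-eigenvector computation above.
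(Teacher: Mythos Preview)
Your argument is correct and takes a genuinely different route from the paper's. The paper works directly with $A-\lambda I$: it adds $w_{k_i}$ times row $i$ to row $r{+}1$ for every $i$, then subtracts $w_{k_j}$ times column $r{+}1$ from column $j$, and shows by the same (F)/(G1)/(G2) telescoping you use that the resulting matrix is block lower-triangular with last row $(0,\dots,0,\lambda_1-\lambda)$ and upper $r\times r$ block lower-triangular with diagonal $w_{k_i}(g(0)-1)-\lambda$; the characteristic polynomial is then read off. In effect the paper's row operation produces the row $a'(A-\lambda I)=(\lambda_1-\lambda)a'$, but never names it as such. Your approach makes this structure explicit: the decomposition $A=\phi a'+L$ isolates the rank-one part, the identity $a'L=s\,a'$ (the real content, and the same computation the paper does implicitly) yields the left eigenvector $a'A=\lambda_1 a'$, and the invariant hyperplane $a^{\perp}$ reduces the problem to the triangular matrix $L$. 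The underlying calculation is identical, but your packaging is more conceptual: it explains \emph{why} the activity vector is a left eigenvector (which the paper only observes later, in Remark~\ref{rem:cov}, as the condition $a\cdot\mathbb{E}\xi_j=\lambda_1$), and it makes the splitting off of $\lambda_1$ transparent rather than a by-product of Gaussian elimination. The paper's version has the advantage of being entirely self-contained and producing the characteristic polynomial in one pass, without the auxiliary invariant-subspace argument for $L\!\restriction_{a^\perp}$.
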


\begin{proof}
We can calculate the eigenvalues of $A$ directly. For any $\lambda$, look at the matrix $A - \lambda I$. For each $i=1, \ldots r$, add $w_{k_i}$ times row $i$ to row $r+1$ of $A - \lambda I$ to get the matrix $A'_{\lambda}$. Using properties (F) and (G2), along the $(r+1)$-th row of $A'_{\lambda}$, the $j$-th entry for $j=1, \ldots, r$ is 
\begin{equation}\label{eq:A'j}
w_{k_j}\left(\sum_{k \geq 1} w_kf(k) + \sum_{k \geq k_j} w_kg(k-k_j) - w_{k_j} - \lambda \right)
\end{equation}
while the $(r+1)$-th entry is 
\begin{equation}\label{eq:A'r+}
\sum_{k\geq1}(w_kf(k) + \chi k g(k)) - \lambda = \lambda_1 - \lambda.
\end{equation}
Next, subtract $w_{k_j}$ times column $r+1$ from column $j$ in $A_{\lambda}'$ for every $j=1,\ldots,r$ to get the matrix $A''_{\lambda}$. Since 
\begin{equation}\label{eq:A''r+}
 w_kg(k-k_j) - \chi(k-k_j) g(k-k_j) =  (\chi k_j + \rho)g(k-k_j) = w_{k_j}g(k-k_j),
 \end{equation}
the $j$-th entry for $j=1, \ldots, r$ of the $(r+1)$-th row is 
\begin{align}
&w_{k_j}\left(\sum_{k \geq 1} w_kf(k) + \sum_{k \geq k_j} w_kg(k-k_j) - w_{k_j} - \lambda \right)- w_{k_j}\left(\lambda_1 - \lambda\right) \nonumber\\
&\hspace{10mm} = w_{k_j}\left(\sum_{k \geq 1} w_kf(k) + \sum_{k \geq k_j} w_kg(k-k_j) - w_{k_j} - \lambda_1 \right) \label{eq:A'rowlater} \\
&\hspace{10mm} =w_{k_j}\left(\sum_{k \geq 1} w_kf(k) + \sum_{k \geq k_j} w_kg(k-k_j) - w_{k_j}  - \sum_{k\geq1}(w_kf(k) + \chi k g(k))\right) \nonumber \\
&\hspace{10mm} =w_{k_j}\sum_{k \geq k_j}w_kg(k-k_j) - w_{k_j}\sum_{k \geq k_j}\chi(k - k_j)g(k - k_j) - w_{k_j}^2 \nonumber \\
&\hspace{10mm} =w_{k_j}\sum_{k \geq k_j} w_{k_j}g(k-k_j) - w_{k_j}^2 \tag*{(by (\ref{eq:A''r+}))} \nonumber  \\
&\hspace{10mm} = w_{k_j}^2 - w_{k_j}^2 = 0. \tag*{(by property (G1))} \nonumber
\end{align}

For every $i,j \leq r$, the $i,j$-th entry of $A''_{\lambda}$ is simply $a_{ij} - w_{k_j}f(k_i)$ when $i\neq j$ and $a_{ii} - \lambda - w_{k_i}f(k_i)$ on the diagonals, where $a_{ij}$ is given in (\ref{eq:A}). 
Therefore, $A''_{\lambda}$ is the following $(r+1)\times (r+1)$ matrix
\[ A''_{\lambda} = \left( \begin{array}{ccccc}
w_{k_1}(g(0) - 1) - \lambda & 0 & \cdots & 0 & f(k_1) \\ \\
w_{k_1}g(k_2 - k_1) & w_{k_2}(g(0) - 1) - \lambda & \cdots & 0 & f(k_2) \\ \\
\vdots & \vdots & \ddots & \vdots & \vdots \\ \\
w_{k_1}g(k_r - k_1) & w_{k_2}g(k_r - k_2) & \cdots & w_{k_r}(g(0) - 1) - \lambda& f(k_r) \\ \\
0 & 0 & \cdots & 0 & \lambda_1 - \lambda
\end{array}\right).
\]
Since the determinant of a matrix is unchanged by adding one row to another or by subtracting a column from another, both $A - \lambda I$ and $A''_{\lambda}$ have the same determinant. We can calculate the determinant of $A''_{\lambda}$ by expanding along the bottom row, and since the upper $r \times r$ matrix of $A''_{\lambda}$ is lower triangular, we see immediately that $A$ has characteristic polynomial
\[  (\lambda_1 - \lambda)\prod_{i=1}^r(w_{k_i}(g(0)- 1) - \lambda), \]
from which we can read off the eigenvalues stated in the lemma. 
\end{proof}

We now calculate the right eigenvector of $A$ associated with $\lambda_1$. Let $v_{1,1} = f(k_1)/(\lambda_1 + w_{k_1}(1 - g(0)))$, and define recursively for $i=2,\ldots,r$
\begin{equation}\label{eq:v1i}
 v_{1,i} = \frac{1}{\lambda_1 + w_{k_i}(1 - g(0))}\left( f(k_i) + \sum_{j=1}^{i-1}w_{k_j}g(k_i - k_j)v_{1,j}\right). 
 \end{equation}
Finally, define 
\begin{equation}\label{eq:vr+1}
v_{1,r+1} = 1 - \sum_{j=1}^r w_{k_j}v_{1,j}
\end{equation}
and 
\begin{equation}\label{eq:v}
v_1 = (v_{1,1}, v_{1,2}, \ldots, v_{1,r}, v_{1, r+1})'.
\end{equation} 

\begin{lemma}\label{lem:eigenvector}
Let $v_1$ be the vector defined above, and let $a = (w_{k_1}, w_{k_2}, \ldots, w_{k_r}, 1)'$. The vector $v_1$ is the unique right eigenvector of $A$ associated with $\lambda_1$ for which $a\cdot v_1 = 1$. 
\end{lemma}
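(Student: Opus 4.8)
The plan is to verify directly that $v_1$ solves $A v_1 = \lambda_1 v_1$ and then invoke the simplicity of $\lambda_1$ for uniqueness. The normalization is automatic: by (\ref{eq:vr+1}), $a\cdot v_1 = \sum_{j=1}^r w_{k_j} v_{1,j} + v_{1,r+1} = 1$. By Lemma \ref{lem:eigenvalues} the eigenvalues of $A$ are $\lambda_1$ together with the numbers $w_{k_i}(g(0)-1)$; since $0 \le g(0) \le 1$ by (G1) and $w_{k_i} > 0$, these latter eigenvalues are nonpositive, whereas $\lambda_1 > 0$ by (A3). Hence $\lambda_1$ is simple and its right eigenspace is one-dimensional, so once $v_1$ is shown to be an eigenvector, any eigenvector with $a\cdot v_1 = 1$ must coincide with it (the scalar is pinned down by $a\cdot(cv_1) = c = 1$).

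For the first $r$ coordinates I would compute the $i$-th row of $A v_1$ from the entries in (\ref{eq:A}) and separate out the terms carrying a factor $f(k_i)$. These assemble into $f(k_i)\big(\sum_{j=1}^r w_{k_j} v_{1,j} + v_{1,r+1}\big) = f(k_i)(a\cdot v_1) = f(k_i)$, while the remaining contributions are exactly $\sum_{j<i} w_{k_j} g(k_i - k_j) v_{1,j} + w_{k_i}(g(0)-1)v_{1,i}$. Setting the $i$-th row of $A v_1$ equal to $\lambda_1 v_{1,i}$ and rearranging reproduces the defining recursion (\ref{eq:v1i}) (with the base case $i=1$ coming from the empty sum), so these $r$ equations hold by construction of $v_1$.

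The remaining, and most delicate, equation is the one indexed by the special type $r+1$; checking it head-on from (\ref{eq:A}) is cumbersome. Instead I would exploit that $a = (w_{k_1}, \dots, w_{k_r}, 1)'$ is a \emph{left} eigenvector of $A$ for $\lambda_1$, i.e. $a'A = \lambda_1 a'$. This is essentially the content of the computation already carried out for Lemma \ref{lem:eigenvalues}: for each column $j \le r$ one finds $(a'A)_j = w_{k_j}\big(\sum_{k\ge1} w_k f(k) + \sum_{k\ge k_j} w_k g(k-k_j) - w_{k_j}\big)$, and the identity (\ref{eq:A''r+}) together with (G1) gives $\sum_{k\ge k_j} w_k g(k-k_j) - w_{k_j} = \sum_{k\ge1}\chi k g(k)$, so that $(a'A)_j = \lambda_1 w_{k_j}$; the column $j=r+1$ is handled the same way, using (F) to combine the $f$-sums. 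Granting $a'A = \lambda_1 a'$, set $z := (A - \lambda_1 I)v_1$. The coordinate computation above gives $z_1 = \dots = z_r = 0$, while $a\cdot z = a'(A-\lambda_1 I)v_1 = (a'A - \lambda_1 a')v_1 = 0$. Since $a\cdot z = a_{r+1} z_{r+1} = z_{r+1}$, we conclude $z_{r+1} = 0$, completing the proof that $A v_1 = \lambda_1 v_1$; uniqueness then follows from the one-dimensionality of the eigenspace and $a\cdot v_1 = 1$. The main obstacle is establishing the left-eigenvector identity $a'A = \lambda_1 a'$, but this reduces to the same $f$/$g$ bookkeeping via properties (F), (G1), (G2) and (\ref{eq:A''r+}) already performed for Lemma \ref{lem:eigenvalues}, so no genuinely new computation is needed.
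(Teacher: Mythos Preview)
Your proof is correct and is essentially the same as the paper's. The paper works with the row-reduced matrix $A'_{\lambda_1}$ from the proof of Lemma~\ref{lem:eigenvalues} (obtained by adding $w_{k_i}$ times row~$i$ to row~$r+1$ of $A-\lambda_1 I$) and verifies $A'_{\lambda_1} v_1 = 0$: the vanishing of the last row of $A'_{\lambda_1}$ is precisely your left-eigenvector identity $a'(A-\lambda_1 I)=0$, and the first $r$ rows are handled via the recursion~(\ref{eq:v1i}) just as you do, so the two arguments differ only in phrasing.
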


\begin{proof}
We verify that $v_1$ is a right eigenvector of $A$ associated with $\lambda_1$. We can look instead at $A_{\lambda}'$ which is introduced in the previous proof. Since only row operations were used to get from $A - \lambda I$ to $A'_{\lambda}$, we get that $(A-\lambda I)v_1 = 0 $ if and only if $A'_{\lambda}v_1 = 0$. We therefore need only to verify that $A'_{\lambda_1} v_1 = 0$ (where all instances of $\lambda$ are replaced with $\lambda_1$). 

Along the $(r+1)$-th row of $A'_{\lambda_1}$ for any $j=1,\ldots,r$ the $j$-th entry is given by (\ref{eq:A'j}), but with $\lambda$ replaced by $\lambda_1$, which is exactly
%\[w_{k_j}\left(\sum_{k \geq 1} w_kf(k) + \sum_{k \geq k_j} w_kg(k-k_j) - w_j - \lambda_1 \right)\]
(\ref{eq:A'rowlater}), and so is equal to 0 by the calculations performed above.
From (\ref{eq:A'r+}), the $(r+1)$-th entry in the $(r+1)$-th row is simply $\lambda_1 - \lambda_1 = 0$. Therefore, the last row of $A'_{\lambda_1}$ is all zeros and the $(r+1)$-th  entry of the vector $A'_{\lambda_1}v_1$ is 0. 

The top $r\times (r+1)$ submatrix of $A'_{\lambda_1}$ is the same as the top $r \times (r+1)$ submatrix of $A - \lambda_1 I$. After rearranging the equality (\ref{eq:v1i}) as 
\begin{equation}\label{eq:newv1i}
f(k_i) + \sum_{j=1}^{i-1}w_{k_j}g(k_i - k_j)v_{1,j} = v_{1,i}(\lambda_1 + w_{k_i}(1 - g(0)))
\end{equation}
and recalling the entries $a_{ij}$ of $A$ from (\ref{eq:A}), we see that for $i=1, \ldots, r$, the $i$-th entry of the vector $A'_{\lambda_1}v_1$ is 
\begin{align*}
&\sum_{j=1}^{i-1}a_{ij}v_{1,j} + (a_{ii}- \lambda_1)v_{1,i} + \sum_{j=i+1}^r a_{ij}v_{1,j} + a_{i,j+1}v_{1,r+1}\\
&\hspace{2mm} = f(k_i)\left(\sum_{j=1}^{r}w_{k_j}v_{1,j} + v_{1,r+1}\right) + \sum_{j=1}^{i-1}w_{k_j}g(k_i - k_j)v_{1,j} + v_{1,i}(w_{k_i}(g(0) - 1) - \lambda_1) \\
&\hspace{2mm} = f(k_i) +  \sum_{j=1}^{i-1}w_{k_j}g(k_i - k_j)v_{1,j} + v_{1,i}(w_{k_i}(g(0) - 1) - \lambda_1) \tag*{(by (\ref{eq:vr+1}))} \\
&\hspace{2mm} = v_{1,i}(\lambda_1 + w_{k_i}(1 - g(0))) + v_{1,i}(w_{k_i}(g(0) - 1) - \lambda_1) =  0. \tag*{(by (\ref{eq:newv1i}))} 
\end{align*}
%\begin{multline*}
%\sum_{j=1}^{i-1}w_{k_j}(f(k_i) + g(k_i - k_j))v_{1,j} + (w_{k_i}(f(k_i) + g(0) - 1) - \lambda_1) v_{1,i} \\
%+ \sum_{j=i+1}^rw_{k_j}f(k_i)v_{1,j} + f(k_i)v_{1,r+1}.
%\end{multline*}
%After substituting in $v_{1,r+1} = 1 - \sum_{j=1}^r w_{k_j}v_{1,j}$, and gathering all of the $w_{k_j}f(k_i)v_{1,j}$ terms, we get 
%\sum_{j=1}^{r}w_{k_j}f(k_i)v_{1,j} + \sum_{j=1}^{i-1}w_{k_j}g(k_i - k_j)v_{1,j} + w_{k_i}(g(0) - 1)v_{1,i} - \lambda_1v_{1,i}\\
%&\hspace{15mm} + f(k_i)(1 - \sum_{j=1}^rw_{k_j}v_{1,j})\\
%\begin{equation*}
%\sum_{j=1}^{i-1}w_{k_j}g(k_i - k_j)v_{1,j} + w_{k_i}(g(0) - 1)v_{1,i} - \lambda_1v_{1,i} + f(k_i). 
%\end{equation*}
%By rearranging (\ref{eq:v1i}), the above can be rewritten as 
%\[(\lambda_1 + w_{k_i}(1 - g(0)))v_{1,i} - f(k_i) + w_{k_i}(g(0) - 1)v_{1,i}  - \lambda_1v_{1,i}  + f(k_i),\]
%which simplifies to $\lambda_1v_{1,i} - \lambda_1v_{1,i} = 0.$
Therefore $A'_{\lambda_1}v_1 = 0$. Furthermore,
\[a \cdot v_1 = \sum_{j=1}^r w_{k_j}v_{1,j} + \left(1 - \sum_{j=1}^r w_{k_j}v_{1,j}\right) = 1.\]
Since $\lambda_1$ has algebraic (and geometric) multiplicity 1, then $v_1$ is the unique vector satisfying the statement of the lemma. 
\end{proof}

%------------------------------------------------------------%
%---------- PROOF OF MAIN RESULTS ----------%
%------------------------------------------------------------%
\subsection{Proofs of main results}\label{sec:mainproofs}

Recall the definitions of $f(k)$ from (\ref{eq:fhook}) and (\ref{eq:fbip}), and $g(k)$ from (\ref{eq:ghook}) and (\ref{eq:gbip}) for a set of blocks $\mathcal{C}$. Recall also that $w_k = \chi k + \rho$. Let $k_1 <  \cdots < k_r$ be the first $r$ essential (out)degrees for hooking networks or bipolar networks grown from $\mathcal{C}$.

We now prove Theorem \ref{thm:hooking}; the multivariate normal limit law for the degrees of hooking networks. Our main results for bipolar networks can be proved in a very similar manner, and we only outline the differences in the proofs. 

\begin{proof}[Proof of Theorem \ref{thm:hooking}]
%For a positive integer $r$, let $k_1 < \cdots < k_r$ be the smallest $r$ essential degrees. 
We look at two cases: when a block is attached to a latch that is not the master hook of the network with degree less than or equal to $k_r$, and when a block is attached to a latch of degree greater than $k_r$ or to the master hook of the network. Recall that the master hook of the network is represented by balls of special type in the urn. \\

\noindent{\sc Case I:} Let $k_j \leq k_r$ be an essential degree and suppose that at some step in the growth of the network a vertex $v$ is chosen as a latch where $\deg(v) = k_j$ and $v$ is not the master hook of the network. Suppose a block is attached to $v$. This corresponds to choosing a ball of type $k_j$. Let $k_i \leq k_r$ be an essential degree. Other than the latch, the expected number of new vertices of degree $k_i$ added to the network is equal to $f(k_i)$. If $k_i > k_j$, the probability that the degree of $v$ is increased to $k_i$ is equal to the probability of choosing a block whose hook has degree $k_i-k_j$, which is exactly $g(k_i-k_j)$. For $k_i, k_j \leq k_r$ and with $\mathbb{E}(\xi_{k_j,k_i})$ being the expected change in the number of balls of type $k_i$ in the networks when a ball of type $k_j$ is chosen, the arguments above show that 
\[ \mathbb{E}(\xi_{k_j,k_i}) = \begin{cases}
f(k_i) & i<j \\
f(k_i) - 1 & i=j \\
f(k_i) + g(k_i-k_j) & i > j.
\end{cases}\]
For every $k$ that is an essential degree greater than $k_r$, balls of special type are added instead of balls of type $k$. By a similar argument as above, the expected number of new balls of special type added corresponding to vertices of degree $k$ when a latch of degree $k_j$ is chosen is $w_{k}(f(k) + g(k-k_j))$. Summing over all essential degrees greater than $k_r$, the expected number of balls of special type added when a ball of type $k_j$ is chosen is 
\[ \mathbb{E}(\xi_{k_j,\ast}) = \sum_{k>k_r}w_k(f(k) + g(k-k_j)).\]

\noindent{\sc Case II:} Now suppose at some step the latch $v$ is either the master hook of the network or that $\deg(v) > k_r$. In either case this corresponds to choosing a ball of special type in our urn; recall that the master hook is represented by balls of special type. Suppose that a block is attached to $v$. For an arbitrary essential degree $k_i \leq k_r$, the expected number of new vertices added with degree $k_i$ is $f(k_i)$. Therefore with $\mathbb{E}(\xi_{\ast, k_i})$ being the expected number of balls of type $k_i$ added when a ball of special type is chosen, 
\[ \mathbb{E}(\xi_{\ast, k_i}) = f(k_i).\]
For any $k\geq 1$, the probability that the degree of $v$ is increased by $k$ is $g(k)$. In this case, the ball of special type is placed back in the urn along with $\chi k$ new balls of special type. For any $k > k_r$, the expected number of new vertices with degree $k$ is once again $f(k)$. Therefore, summing over all values of $k$, the expected change in the number of balls of special type in the urn is 
%\[ \mathbb{E}(\xi_{\ast,\ast}) = \sum_{k>d}(w_k f(k)  + \chi k g(k)) = \sum_{k>d}w_k f(k) + \sum_{k \geq 1} \chi kg(k).\]
\[ \mathbb{E}(\xi_{\ast,\ast}) =\sum_{k>k_r}w_k f(k) + \sum_{k \geq 1} \chi kg(k).\]

Let $\mathbb{E}(\xi_{k_j}) :=  (\mathbb{E}\xi_{k_j,k_1}, \ldots, \mathbb{E}\xi_{k_j,k_r}, \mathbb{E}\xi_{k_j, \ast})$ for $j=1 ,\ldots, r$ and for the special type~$\ast$ let $\mathbb{E}(\xi_{\ast}) := (\mathbb{E}\xi_{\ast, k_1}, \ldots, \mathbb{E}\xi_{\ast,k_r}, \mathbb{E}\xi_{\ast,\ast})$. The activity of each ball of type $k_j \leq k_r$ is $w_{k_j}$, and the activity of the ball of special type $\ast$ is 1. 
%After removing the rows and columns corresponding to degrees that are not admissible, 
The intensity matrix is therefore the matrix $A$ whose columns are $w_{k_j}\mathbb{E}(\xi_{k_j})$ for $j=1, \ldots, r$ and whose $(r+1)$-th column is $\mathbb{E}(\xi_{\ast})$. This is precisely the matrix given in (\ref{eq:A}),  %(see Appendix \ref{app:A} for a representation of $A$), 
with $g(0) = 0$. 

%Since assumptions (F), (G1), and (G2) hold by Lemma \ref{lem:ass}, then 
By Lemma \ref{lem:eigenvalues}, we get that the intensity matrix $A$ has largest real eigenvalue $\lambda_1 = \sum_{k \geq 1}(w_kf(k) + \chi k g(k)) > 0$, and the other eigenvalues $-w_{k_1}, -w_{k_2}, \ldots, -w_{k_r}$ are all negative (and so less than $\lambda_1/2$). 

%Theorem \ref{thm:hooking} follows from Theorem \ref{thm:urns} and Remark \ref{rem:exp}. 
The vector $v_1$ defined in (\ref{eq:v}) with $g(0) = 0$ and restricted to the first $r$ entries is exactly the vector $\nu$ defined in (\ref{eq:vhook}). Theorem \ref{thm:hooking} now follows immediately from Lemma \ref{lem:eigenvector}, and Theorem \ref{thm:urns}.
\end{proof}

\begin{proof}[Proof of Corollary \ref{thm:hookingspecial}]
Every time a new block $G_i$ with hook $h_i$ is attached to the hooking network by fusing $h_i$ with the latch $v$, any new vertex $u$ of $G_i$ added to the network is represented either by a ball of type $\deg(u)$ (with activity $\chi \deg(u) + \rho$) or by $\chi \deg(u) + \rho$ balls of special type (with activity 1). As for the latch $v$, one of the following cases applies:
\begin{itemize}
\item a ball of activity $\chi\deg(v) + \rho$ is removed and replaced with a ball of activity $\chi(\deg(v) + \deg(h_i)) + \rho$, 
\item a ball of activity $\chi\deg(v) + \rho$ is removed and replaced with $\chi(\deg(v) + \deg(h_i)) + \rho$ balls of special type (with activity 1), or
\item an additional $\chi\deg(h_i)$ balls of special type are added.
\end{itemize}
In any case the change in the total activity of the urn is
\[ s_i = \chi\deg(h_i) + \sum_{u \in V(G_i)\setminus\{h_i\}}( \chi \deg(u) + \rho) = 2\chi|E(G_i)| + \rho(|V(G_i)| - 1),\]
where the last equality holds thanks to the handshaking lemma (the sum of the degrees in a graph is twice the number of edges). Suppose that all $s_i$ are equal. The change in total activity is equal at every step, independent of which block is attached. Therefore, the corresponding urn is balanced. By \cite[Remark 1.9]{JAPO:18}, the urn satisfies the conditions of \cite[Theorem 1.1]{JAPO:18}, and so by Remark \ref{rem:moms}, Corollary \ref{thm:hookingspecial} holds. 
\end{proof} 

Theorem \ref{thm:bipolar} and Corollary \ref{thm:bipolarspecial} are proved in a similar manner to the two proofs above. We therefore omit the details, and only specify where the proofs differ. 

\begin{proof}[Proof of Theorem \ref{thm:bipolar}:]
The probability that the degree of a latch $v$ is increased by $k$ is now the probability of choosing a block whose north pole had outdegree $k + 1$ (since an arc is removed from $v$ when a block is attached). This probability is exactly defined to be $g(k)$. If a north pole has outdegree 1, then the outdegree of $v$ is not changed, and so the probability that the outdegree of $v$ is unchanged is $g(0)$. With similar arguments as in the proof of Theorem \ref{thm:hooking}, we can calculate the intensity matrix. The only differences between the intensity matrix for bipolar networks and that for hooking networks are the first $r$ diagonal entries, which are 
\[ w_{k_i}\mathbb{E}(\xi_{k_i,k_i}) = w_{k_i}(f(k_i) - g(0) - 1)\]
for $i=1, \ldots, r$ in the case of bipolar networks. The value $\mathbb{E}(\xi_{\ast,\ast})$ is the same as before since $\chi k g(k) = 0$ when $k=0$.

%The probability that the degree of a latch $v$ of degree $j$ is increased to $i$ is now the probability of choosing a block with a north pole of outdegree $i-j+1$ (since an arc is removed from $v$ when a block is attached). This probability is exactly defined to be $g(j-i)$. If a north pole has outdgree $1$, then the outdgree of $v$ is not changed, and so the probability that the outdegree of $v$ is unchanged is $g(0)$. With similar arguments as the proof of Theorem \ref{thm:hooking} to calculate the expected number of new vertices with degree $j$, the intensity matrices for the two networks are the same, except for $\mathbb{E}(\xi_{i,i})$ which is 
%\[ \mathbb{E}(\xi_{i,i}) = f(i) - g(0) - 1\]
From Lemma \ref{lem:eigenvalues}, we get that the largest real eigenvalue of the intensity matrix is $\lambda_1 = \sum_{k \geq 1}(w_kf(k) + \chi k g(k)) > 0$ and the other eigenvalues are 
\[ w_{k_1}(g(0) - 1), w_{k_2}(g(0) - 1), \ldots, w_{k_r}(g(0) - 1).\]
Since $g(0) \leq 1$, each eigenvalue $\lambda \neq \lambda_1$ is non-positive, and so is less than $\lambda_1/2$. The vector $v_1$ defined in (\ref{eq:v}) restricted to the first $r$ entries is exactly the vector $\psi$ defined in (\ref{eq:vbipolar}), and the result now follows just as in the proof of Theorem \ref{thm:hooking}. 
\end{proof}

\begin{proof}[Proof of Corollary \ref{thm:bipolarspecial}]
Since an arc is removed at each step, the total change in activity when block $B_i$ is attached is (by similar argument to the proof of Corollary \ref{thm:hookingspecial})
\[ s_i = \chi\deg^+(N_i) - \chi +  \!\!\!\!\!\!\!\! \sum_{u \in V(B_i)\setminus\{N_i, S_i\}}\!\!\!\!\!\!\!\!\!\!\!(\chi\deg^+(u) + \rho) = \chi(|E(B_i)| - 1) + \rho(|V(B_i)| - 1).\]
If all the $s_i$'s are equal for every block, then once again the urn is balanced and Corollary \ref{thm:bipolarspecial} holds by \cite[Theorem 1.1]{JAPO:18} and Remark \ref{rem:moms}.
\end{proof}

\begin{remark}\label{rem:naughtagain}
From Remark \ref{rem:naught} we know that the initial configuration of our urn does not effect the limiting distribution. This means that we may let the original block used to make $\mathcal{G}_0$ or $\mathcal{B}_0$ to be chosen at random, or to be deterministic. It also means that if we wanted to change the probability of choosing the master hook of a hooking network or the master source of a bipolar network, we can simply change the number of balls of special type at the beginning of the urn process.
\end{remark}

\begin{remark}\label{rem:cov}
We can say something more about the covariance matrices $\Sigma$ of Theorems \ref{thm:hooking} and \ref{thm:bipolar}.  With the activity vector $a = (w_{k_1}, \ldots, w_{k_r}, 1)$, 
\[a\cdot \mathbb{E}(\xi_{\ast}) = \sum_{k \geq 1} (w_kf(k) + \chi k g(k)) = \lambda_1,\]
and for $j=1, \ldots, r$, 
\begin{align*}
a\cdot \mathbb{E}(\xi_{k_j}) &= \sum_{k\geq1} w_kf(k) + \sum_{i\geq j}w_{k_i}g(k_i - k_j) - w_{k_j} \\
&=  \sum_{k\geq1} w_kf(k) + \sum_{i\geq j}(w_{k_j} + \chi(k_i - k_j))g(k_i - k_j) - w_{k_j} \\
&= \sum_{k \geq 1}w_kf(k) + w_{k_j}\sum_{k \geq 0}g(k) + \sum_{k\geq 0}\chi k g(k) - w_{k_j} \\
&= \sum_{k \geq 1}w_kf(k) + \sum_{k\geq 0}\chi kg(k) = \lambda_1,
\end{align*}
with the last line following from property (G1) of Proposition \ref{lem:ass}. Thus, we see that Theorem \ref{thm:urns} (ii) applies with $c = \lambda_1$ and $\Sigma = \lambda_1\Sigma_1$, where $\Sigma_1$ is defined in (\ref{eq:sigma1}). 
\end{remark}

\begin{remark}\label{rem:covdiag}
Furthermore, if $\chi > 0$, then the values $w_k = \chi k+ \rho$ are all different, and so from Lemma \ref{lem:eigenvalues}, all of the eigenvalues of $A$ are different. In this case, the matrix $A$ is diagonalizable, and so Theorem \ref{thm:urns} (iii) applies and $\Sigma$ can be calculated from (\ref{eq:sigmadiag}). The diagonalizability of $A$ does not hold in general, see for example the matrix $A$ of (\ref{eq:Abip}).
\end{remark}

\section*{Acknowledgment}
We would like to acknowledge and thank Hosam Mahmoud for introducing us to the topic of hooking networks.
We would also like to thank the referees for their useful comments.

%--------------------------------------------%
%---------- BIBLIOGRAPHY ----------%
%--------------------------------------------%

%\begin{appendix}

%\section{The intensity matrix $A$}\label{app:A}

%\begin{sidewaysfigure}
%\[
% \left( \begin{array}{ccccc}
%w_{k_1}(f(k_1) + g(0) -1) & w_{k_2}f(k_2) & \cdots & w_{k_r}f(k_1) & f(k_1) \\ \\
%w_{k_1}(f(k_2) + g(k_2 - k_1)) & w_{k_2}(f(k_2) + g(0) - 1) & \cdots & w_{k_r}f(k_2) & f(k_2) \\ \\
%\vdots & \vdots & \ddots & \vdots & \vdots \\ \\
%w_{k_1}(f(k_r) + g(k_r - k_1)) & w_{k_2}(f(k_r) + g(k_r - k_2)) & \cdots & w_{k_r}(f(k_r) + g(0) - 1) & f(k_r)\\ \\
%w_{k_1}\displaystyle\sum_{k>d}w_k(f(k) + g(k-k_1)) & w_{k_2}\displaystyle\sum_{k>d}w_k(f(k) + g(k-k_2)) & \cdots & w_{k_r}\displaystyle\sum_{k>d}w_k(f(k) + g(k-k_r)) & \displaystyle\sum_{k>d}w_k f(k)  + \displaystyle\sum_{k \geq 1}k\chi g(k).
%\end{array}\right)
%\]
%\end{sidewaysfigure}

%\end{appendix}


\begin{thebibliography}{abcd:99}

\bibitem{CHMA:16} C. Chen and H.M. Mahmoud, Degrees in random self-similar bipolar networks, {\em J. Appl. Probab.} {\bf 53} (2016), 434--447. 

\bibitem{DEHO:19} C. Desmarais and C. Holmgren, Degree distributions of generalized hooking networks, in {\em 2019 Proceedings of the Sixteenth Workshop on Analytic Algorithms and Combinatorics (ANALCO),} 103--110, SIAM, 2019. 

\bibitem{GOMW:14} M. Gopaladesikan, H.M. Mahmoud, and M.D. Ward, Building random trees from blocks, {\em Probab. Engrg. Inform. Sci.} {\bf 28} (2014), 67--81.

\bibitem{HOJS:17} C. Holmgren, S. Janson, and M. \v{S}ileikis, Multivariate normal limit laws for the numbers of fringe subtrees in $m$-ary search trees and preferential attachment trees, {\em Electron. J. Combin.} {\bf 24} (2017), Paper 2.51, 49pp.

\bibitem{JANS:04} S. Janson, Functional limit theorems for multitype branching processes and generalized P\'olya urns, {\em Stochastic Process. Appl.} {\bf 110} (2004), 177--245. 

\bibitem{JANS:05} S. Janson, Asymptotic degree distributions in random recursive trees, {\em Random Structures Algorithms} {\bf 26} (2005), 69--83.

\bibitem{JAPO:18} S. Janson and N. Pouyanne, Moment convergence of balanced P\'olya processes, {\em Electron. J. Probab.} {\bf 34} (2018), Paper No. 34, 13pp. 

\bibitem{MAHM:19} H.M. Mahmoud, Local and global degree profiles of randomly grown self-similar hooking networks under uniform and preferential attachment, to appear in {\em Adv. in Appl. Math.} {\bf 111} (2019).

\bibitem{MASM:92} H.M. Mahmoud and R.T. Smythe, Asymptotic joint normality for outdegrees of nodes in random recursive trees, {\em Random Structures Algorithms} {\bf 3} (1992), 255--266.

\bibitem{MASS:93} H.M. Mahmoud, R.T. Smythe, and J. Szyma\'nski, On the structure of random plane-oriented recursive trees and their branches, {\em Random Structures Algorithms} {\bf 4} (1993), 151--176.

\bibitem{RESA:16} S.I. Resnick and G. Samorodnitsky, Asymptotic normality of degree counts in a preferential attachment model, {\em Adv. in Appl. Probab.} {\bf 48} (2016), 283--299.

\bibitem{WARE:17} T. Wang and S.I. Resnick, Asymptotic normality of in- and out-degree counts in a preferential attachment model, {\em Stoch. Models} {\bf 33} (2017), 229--255.

\end{thebibliography}
\end{document}